\author[B. Cupps]{Brian P. Cupps}
\address{Brian P. Cupps \hfill\break
	Senior Scientist,
	Department of Surgery,
	Washington University School of Medicine}
\email{cuppsb@wustl.edu}
\author[J. Morgan]{Jeff Morgan}
\address{Jeff Morgan \hfill\break
	Department of Mathematics, 
	University of Houston, Houston, Texas 77004, USA}
\email{jmorgan@math.uh.edu}
\author[B.Q. Tang]{Bao Quoc Tang}
\address{Bao Quoc Tang \hfill\break
	Institute of Mathematics and Scientific Computing, University of Graz, 
	Heinrichstrasse 36, 8010 Graz, Austria}
\email{quoc.tang@uni-graz.at, baotangquoc@gmail.com}
\title[Reaction-diffusion systems with dissipation of mass]{Uniform boundedness for Reaction-diffusion systems with mass dissipation}
\newcommand{\R}{\mathbb R}	
\renewcommand{\O}{\Omega}
\newcommand{\pa}{\partial}
\newcommand{\wt}[1]{#1^\prime}
\newtheorem{theorem}{Theorem}[section]
\newtheorem{definition}{Definition}[section]
\newtheorem{lemma}{Lemma}[section]
\newtheorem{proposition}{Proposition}[section]
\newtheorem{corollary}{Corollary}[section]
\newtheorem{remark}{Remark}[section]
\begin{document}
	\subjclass[2010]{35A01, 35K57, 35K58, 35Q92}
	\keywords{Reaction-diffusion systems; Classical solutions; Global existence; Uniform-in-time boundedness; Mass dissipation}
	\begin{abstract}
		We study the global existence and uniform-in-time bounds of classical solutions in all dimensions to reaction-diffusion systems dissipating mass. By utilizing the duality method and the regularization of the heat operator, we show that if the diffusion coefficients are close to each other, or if the diffusion coefficients are large enough compared to initial data, then the local classical solution exists globally and is bounded uniformly in time. Applications of the results include the validity of the Global Attractor Conjecture for complex balanced reaction systems with large diffusion.
	\end{abstract}

	\maketitle
	
	\tableofcontents
	
	\section{Introduction and Main Results}	
	Let $n\geq 1$ and $\Omega\subset \mathbb R^n$ be a bounded domain with smooth boundary $\partial\Omega$, e.g. $\partial\Omega$ is of $C^{2+\alpha}$-class for some $\alpha>0$. We consider the following reaction-diffusion system
	for the vector of concentrations $u = (u_1,\ldots, u_m): \Omega \times [0,\infty) \to \mathbb R^m$:
	\begin{equation}\label{S}\tag{S}
		\begin{cases}
			\pa_t u_i - d_i\Delta u_i = f_i(u), &x\in\Omega, t>0,\\
			\nabla u_i \cdot \nu = 0, &x\in\partial\Omega, t>0,\\
			u_{i}(x,0) = u_{i,0}(x) \geq 0, &x\in\Omega.
		\end{cases}
	\end{equation}
	We study global existence of classical solutions to \eqref{S} where the domain and initial data satisfy
	\begin{enumerate}[label = (A0),ref=A0]
		\item\label{D} $\Omega\subset \mathbb R^n$ is a bounded domain with smooth boundary $\partial\Omega$ and unit outward normal $\nu$;
		
		\noindent The diffusion coefficients are positive, i.e. $d_i > 0$ for all $i=1,\ldots, m$;
		
		\noindent The initial data $u_{i0}\in L^\infty(\Omega)$ is nonnegative for all $i=1,\ldots, m$,
	\end{enumerate}
	and the nonlinearities satisfy
	\begin{enumerate}[label=(A\theenumi),ref=A\theenumi]
		\item\label{A1} (Local Lipschitz continuity) $f_i: \R^m \to \R^m$ is locally Lipschitz continuous, for all $i = 1, \ldots, m$.
		\item\label{A2} (Quasi-positivity) For all $z\in \R_+^m$, $f_i(z) \geq 0$ provided $z_i = 0$, for all $i=1,\ldots, m$.
%
%
%
		\item\label{A3} (Mass dissipation) For all $u\in \mathbb R^n_+$,
		\begin{equation}\label{mass_control}
			\sum_{i=1}^{m}f_i(u) \leq 0.
		\end{equation}
	\end{enumerate}	
	The local Lipschitz continuity \eqref{A1} of the nonlinearities implies the existence of a local \textit{strong solution} to \eqref{S} on a maximal interval $[0,T_{\max})$. From \eqref{D}, the initial data are nonnegative, so the quasi-positivity \eqref{A2} ensures that the solution stays nonnegative as long as it exists. The assumption \eqref{A3} gives an upper bound on the total mass of the system. Indeed, by summing the equations in \eqref{S}, integrating on $\Omega$, and using the homogeneous Neumann boundary conditions and \eqref{A3}, it follows that
	\begin{equation}\label{uniform_L1}
		\sum_{i=1}^m\int_{\Omega}u_i(x,t)dx \leq \sum_{i=1}^m\int_{\Omega}u_{i0}(x)dx
	\end{equation}	
	for all $t\in (0,T_{\max})$. Note that, together with the nonnegativity, \eqref{uniform_L1} gives a bound on the $L^1$-norm of the solution, uniform in time. Moreover, if all diffusion coefficients are the same, i.e. $d_i = d$ for $i=1,\ldots, m$, then by setting $z = \sum_{i=1}^m u_i$, one gets from \eqref{S} 
	\begin{equation*}
		\partial_tz - d\Delta z \leq 0 \quad \nabla z \cdot \nu = 0 \quad z(x,0) = \sum_{i=1}^mu_{i0}(x) \geq 0
	\end{equation*}
	thanks to \eqref{A3}. It follows from the maximum principle that $\|z(t)\|_{L^\infty(\Omega)} \leq \|z(0)\|_{L^\infty(\Omega)}$ and therefore, due to the nonnegativity of $u_i$ we get
	\begin{equation*}
		\|u_j(t)\|_{L^\infty(\Omega)} \leq \bigl\|\sum_{i=1}^mu_{i0}\bigr\|_{L^\infty(\Omega)} \quad \text{ for all } \quad t\geq 0, \text{ and } j=1,\ldots, m,
	\end{equation*}
	and 
	hence the global existence and uniform bounds of the solution to \eqref{S}. However, the condition of equal diffusion is much too restrictive, and when the diffusion coefficients are different, the situation changes dramatically, and it seems there exists no elegant argument to obtain global existence of solution to \eqref{S}.

	\medskip
	Reaction-diffusion systems satisfying \eqref{D}--\eqref{A3} appear frequently in applications, especially biology or chemistry, and therefore have been studied decades ago, see e.g. the works \cite{CHS78,Rot84,Ama85,HMP87,Mor89,Mor90,FHM97} and many references therein. Despite this long list of publications, and the fact that the local existence is standard, global existence to \eqref{S} under conditions \eqref{D}--\eqref{A3} remains as a delicate issue. A famous example in \cite{PS97} shows that there exist systems satisfying assumptions \eqref{D}--\eqref{A3}, yet still have strong solutions blowing up in finite time\footnote{It should be remarked that the example in \cite{PS97} considered a system with inhomogeneous Dirichlet boundary conditions. An explicit example of blow-up solutions for \eqref{S} with homogeneous Neumann boundary conditions, up to our knowledge, is unknown.}. This indicates that one should either look for the global existence of weaker notions of solutions or impose extra assumptions to the system. Concerning the former direction, it was proved that \eqref{S} has a {\it weak solution} if one knows in prior that the nonlinearities belong to $L^1(\Omega\times(0,T))$ \cite{Pie03}, or when the nonlinearities are at most quadratic \cite{DFPV07}. We refer the interested reader to the extensive review \cite{Pie10} for more references. An even weaker notion called {\it renormalized solution} has also been investigated recently \cite{Fis15,PSZ17}. On the other hand, conditional existence for global strong solutions is also an active research direction. In small dimensions, $n\leq 2$, global strong solution is shown provided \eqref{S} has at most quadratic nonlinearities \cite{PSY19,GV10}. Recent results have shown that global strong solutions in fact exist in all dimensions \cite{FT18,CGV19,Sou18}, in which the two latter references impose a stronger condition called {\it entropy inequality}. It should be also noted that the case where $\Omega = \mathbb R^n$ and \eqref{mass_control} satisfies with an equality sign was solved in an almost unnoticed paper \cite{Kan90}. For nonlinearities of higher order, it was shown in \cite{CDF14,FLS16} that if the diffusion coefficients are close enough to each other, and \eqref{mass_control} is satisfied with an equality sign, then strong solutions exist globally. 

	\medskip
	We emphasize that most of the existing works about global existence of strong solutions do not give control of the solution in time, i.e. strong solutions could blow up in infinite time, except for special cases, e.g. in \cite{PSZ17} when $n\leq 2$ and the nonlinearities are at most quadratic. Therefore, the main motivation of this work is to show the global existence and {\it uniform bounds in time} of strong solutions to \eqref{S} in all dimensions under \eqref{D}--\eqref{A3} and an extra assumption on the diffusion coefficients. More precisely, we show that if the diffusion coefficients are close enough to each other, or they are large enough, then the local strong solution to \eqref{S} exists globally and is bounded uniformly in time in the $L^\infty$-norm. The central idea is to combine the duality method, the regularization of the heat operator, and the $L^1$-norm bound \eqref{uniform_L1}.
	
	\medskip

	In the first part of this paper, we prove the uniform boundedness of solutions to \eqref{S} when the diffusion coefficients are close enough to each other. This is usually referred to as the case of {\it quasi-uniform diffusion coefficients}.
	\begin{theorem}\label{uniform_bound}
		Assume the conditions \eqref{D}--\eqref{A3}, and the nonlinearities have polynomial growth, i.e.
			\begin{enumerate}[label={\normalfont(A\theenumi)},ref=A\theenumi]
			\setcounter{enumi}{3} 
			\item\label{A4}there exists $\mu> 0$ such that for all $u\in \R^n$,
				\begin{equation*} 
					|f_i(u)| \leq C(1+ |u|^{\mu}) \quad \text{ for all } \quad i=1,\ldots, m,
				\end{equation*}
				for some $C>0$.
			\end{enumerate}
		Then there exists a constant $\delta>0$ depending on the dimension $n$ and the growth of nonlinearities $\mu$, but {\normalfont independent of the diffusion coefficients}, such that if
		\begin{equation}\label{small_diff}
			\max\{d_i\} - \min\{d_i\} < \delta
		\end{equation}
		then \eqref{S} has a unique strong solution which is bounded uniformly in time, i.e.
		\begin{equation*}
			\limsup_{t>0}\|u_i(t)\|_{L^\infty(\Omega)} < +\infty \quad \text{ for all } \quad i=1,\ldots, m.
		\end{equation*}
	\end{theorem}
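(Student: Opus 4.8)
The plan is to collapse the system to a scalar problem for the total mass $z=\sum_{i=1}^m u_i$, control it through the duality method, and then bootstrap to $L^\infty$. Summing the equations in \eqref{S}, setting $w=\sum_{i=1}^m d_i u_i$ and using \eqref{A3}, I obtain $\partial_t z - \Delta w \le 0$ with homogeneous Neumann data and $z(\cdot,0)=\sum_i u_{i,0}$. Since each $u_i\ge 0$, I can write $w = A z$ with a bounded measurable coefficient $A = w/z$ satisfying $\min_i d_i \le A \le \max_i d_i$; thus $z$ is a subsolution of the non-autonomous scalar equation $\partial_t z - \Delta(Az)\le 0$. All the closeness of the diffusions is now encoded in the oscillation of $A$, which is at most $\max_i d_i - \min_i d_i$.

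The core estimate is an improved $L^p$ duality bound. For a nonnegative test density $\theta$ I solve the backward dual problem $-\partial_t\phi - A\Delta\phi = \theta$, $\phi(T)=0$, with Neumann conditions; testing the subsolution inequality against $\phi$ and integrating by parts in space and time yields $\int_0^T\!\int_\Omega z\,\theta \le \int_\Omega z_0\,\phi(0)$, so that an $L^p$ bound on $z$ reduces to an $L^{p'}$ bound on $\Delta\phi$. Centering at $\bar d = \tfrac12(\max_i d_i + \min_i d_i)$ and moving the perturbation to the right, $-\partial_t\phi - \bar d\Delta\phi = \theta + (A-\bar d)\Delta\phi$, I apply the maximal regularity of the constant-coefficient heat operator $-\partial_t - \bar d\Delta$, whose $L^{p'}$ constant $C_{p'}$ equals $1$ at $p'=2$, is invariant under the parabolic rescaling that normalizes $\bar d$, and depends continuously on $p'$. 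This gives $\bar d\|\Delta\phi\|_{L^{p'}} \le C_{p'}\|\theta\|_{L^{p'}} + C_{p'}\|(A-\bar d)\Delta\phi\|_{L^{p'}}$, and since $|A-\bar d|\le \tfrac12(\max_i d_i - \min_i d_i)$ the last term is absorbed as soon as the spread obeys the smallness condition \eqref{small_diff}. The admissible exponents $p'$ form an interval around $2$ whose size grows as the spread shrinks, and the resulting bound $\|z\|_{L^p(\Omega\times(0,T))}\le C$ then holds for a correspondingly large $p$.

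To make the duality bound uniform in time rather than merely finite on $[0,T]$, I would run it on sliding unit windows $[\tau,\tau+1]$, using the uniform $L^1$ control \eqref{uniform_L1} as the data at the start of each window and the smoothing and decay of the Neumann heat semigroup on the mean-zero subspace to keep all constants independent of $\tau$. Once $z$, and hence each $u_i$, is bounded in $L^p$ on every such window with $p$ large, I bootstrap to $L^\infty$: the polynomial growth \eqref{A4} gives $f_i(u)\in L^{p/\mu}$, and feeding this into the Duhamel formula for $\partial_t u_i - d_i\Delta u_i = f_i(u)$ together with the smoothing estimate $\|e^{td_i\Delta}g\|_{L^q}\lesssim t^{-\frac n2(\frac1r-\frac1q)}\|g\|_{L^r}$ raises the integrability of $u_i$; choosing $\delta$ small enough that the duality exponent exceeds $\mu(n+2)/2$ lets a single application of parabolic regularity and Sobolev embedding reach $L^\infty$, and otherwise one iterates finitely often. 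Carrying this out window-by-window yields $\limsup_{t>0}\|u_i(t)\|_{L^\infty(\Omega)}<\infty$, which upgrades the local strong solution to a global one through the standard continuation criterion under \eqref{A1}.

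The main obstacle is the interplay in the second step between two competing demands on $p$: the bootstrap in dimension $n$ with growth $\mu$ forces $p$ above the threshold $\mu(n+2)/2$, whereas the absorption in the maximal-regularity estimate caps the admissible $p$ through the constant $C_{p'}$, which departs from its optimal value $1$ as $p'$ moves away from $2$. Reconciling these is exactly what fixes the threshold $\delta=\delta(n,\mu)$ in \eqref{small_diff}; the facts that $C_2=1$, that $C_{p'}$ is continuous in $p'$, and that it is unchanged by the scaling normalizing $\bar d$ are what guarantee a nonempty admissible range and a $\delta$ depending only on $n$ and $\mu$. The second delicate point is the time-uniformity itself: a naive duality bound degrades as $T\to\infty$, and it is precisely the mass-dissipation estimate \eqref{uniform_L1} together with the decay of the heat semigroup that prevents contributions from the past from accumulating.
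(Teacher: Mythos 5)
Your first half --- global existence on a fixed interval $(0,T)$ via the improved duality estimate --- is essentially the paper's Proposition~\ref{global_1} (which in turn reproduces Ca\~nizo--Desvillettes--Fellner): whether one tests each component against the constant-coefficient dual problem and sums, as the paper does, or works with the scalar subsolution $\partial_t z-\Delta(Az)\le 0$ and absorbs $(A-\bar d)\Delta\phi$ on the dual side, as you do, the resulting smallness condition $\frac{1}{2}(d_{\max}-d_{\min})C_{d,p'}<1$ is the same, and your observations that $C_2\le 1$, that the constant is continuous in $p'$ and behaves well under the rescaling normalizing the mean diffusivity, are the correct reasons the admissible range of exponents is nonempty.

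The gap is in the uniform-in-time step, which is the actual content of the theorem beyond known global existence. You propose to run the duality on sliding unit windows $[\tau,\tau+1]$ ``using the uniform $L^1$ control as the data at the start of each window.'' But the duality estimate pairs the window's initial data with the dual solution at the initial time, producing the term $\int_\Omega u(\tau)\phi(\tau)\,dx$; to exploit only an $L^1$ bound on $u(\tau)$ you must place $\phi(\tau)$ in $L^\infty$, which by parabolic embedding forces the dual exponent above $\frac{n+2}{2}$ and hence the primal exponent below $\frac{n+2}{n}$. A single pass per window therefore yields at best $L^{\frac{n+2}{n}-}$, far below the threshold $\mu(n+2)/2$ needed to launch your bootstrap, and you cannot instead restart with $L^p$ data at $\tau$, because a uniform-in-$\tau$ bound on $\|u(\tau)\|_{p,\Omega}$ is precisely what is being proved. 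The heat-semigroup decay on mean-zero functions does not repair this: without an a priori bound there is no reason $u-\overline{u}$ decays. What is needed, and what the paper does, is a device that removes the window's initial data altogether: multiply the equation by a cutoff $\psi_\tau$ with $\psi_\tau(\tau)=0$, so that $\psi_\tau u_i$ has zero data and the price is the source $\psi_\tau' u_i$, controlled in $L^\infty(0,\infty;L^1(\Omega))$ and paired with $\|\phi\|_{\infty}$ at the first step and with lower norms of $\phi$ at later steps; one then runs a finite induction over exponents $q_k=\left(\frac{n+2}{n+1}\right)^k$ on windows of a fixed finite length $M$, losing one unit of time per step, until $q_K>\mu(n+2)/2$, and only then bootstraps to $L^\infty$. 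Your outline is missing both the cutoff mechanism and this exponent-raising induction, so as written the window estimates cannot be made uniform in $\tau$.
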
 	
	\begin{remark}\hfill\
	\begin{itemize}
		\item It's worthwhile to remark that the constant $\delta$ can be quantified explicitly in terms of the diffusion coefficients and a constant arising from the maximal regularity of parabolic equations. See \eqref{cond1} and Lemma \ref{maximal_reg}.
		\item One can see Theorem \ref{uniform_bound} as a perturbation to the case of equal diffusion coefficients, as it does not allow diffusion coefficients to differ too much from each other. Yet, to prove the result in this "perturbation" setting, we still need the additonal polynomial growth condition for the nonlinearities. Proving Theorem \ref{uniform_bound} without this polynomial growth remains an interesting open problem.
		\item The result of Theorem \ref{uniform_bound} generalizes immediately when \eqref{mass_control} is replaced by
		\begin{equation*}
			\sum_{i=1}^m\alpha_if_i(u) \leq 0
		\end{equation*}
		for some $\alpha_1, \ldots, \alpha_m \in (0,\infty)$.
	\end{itemize}
	\end{remark}

	To prove Theorem \ref{uniform_bound}, we first use a duality technique to show that the local strong solution exists globally. Note that this already improves the results in \cite{CDF14} by extending it to the case of mass dissipation \eqref{mass_control} instead of mass conservation (the case when \eqref{mass_control} is satisfied with an equality). The obtained solution, however, might have $L^\infty$-norm growing in the time horizon $T$. To show the uniform boundedness in time, we need to exploit the uniform bound in the $L^1$-norm \eqref{uniform_L1}. Moreover, we use a truncated function in time $\psi: \mathbb R \to [0,1]$ which is smooth and increasing, with $\psi(t) = 0$ on $(-\infty,0]$, and $\psi(t) = 1$ on $[1,\infty)$ and $|\psi'(t)| \leq 2$, together with its shifted version $\psi_\tau(\cdot) = \psi(\cdot-\tau)$. By multiplying \eqref{S} with $\psi_\tau$, one gets the new equation for $\psi_\tau u_i$
	\begin{equation*}
	\partial_t(\psi_\tau u_i) - d_i\Delta(\psi_\tau u_i) = u_i\psi_\tau' + \psi_\tau f_i(u)
	\end{equation*}
	{\it with zero initial data} at time $\tau$, i.e. $(\psi u_i)(x,\tau) = 0$. This helps get rid of the usual technical issue of choosing a sequence of initial time points in the bootstrap process, see e.g. \cite[Theorem 2.5]{Mor90} or \cite[Theorem 2.6]{FHM97}.
	
	\medskip
	The second part of this paper deals with the case when the diffusion coefficients are large enough. To state the main result, we first consider a truncation of \eqref{S}.  For $r>0$, let $\Phi_r: \mathbb R^n \to [0,1]$ be a $C^\infty$ function such that $\Phi_r(x) = 1$ on $|x| \leq r$ and $\Phi_r(x) = 0$ on $|x| \geq 2r$ and $|\nabla \Phi_r(x)| \leq 2$ for all $x\in \mathbb R^n$. Consider the following truncated system
	\begin{equation}\label{sys_u}
	\begin{cases}
		\pa_t u_i - d_i\Delta u_i = \Phi_r(u)f_i(u), &(x,t)\in Q_T,\\
		\nabla u_i\cdot \nu = 0, &(x,t)\in \partial\Omega\times(0,T),\\
		u_i(x,0) = u_{i,0}(x) \geq 0, &x\in\Omega.
	\end{cases}
	\end{equation}
	Since $\Phi_r(u)f_i(u)$ is uniformly bounded, it is obvious that the system \eqref{sys_u} has a global classical solution which is uniformly bounded in time. Moreover, the solution of \eqref{sys_u} coincides with that of \eqref{S} as long as this solution remains in the ball $B_0(r) \subset \mathbb R^m$. For simplicity, we will denote by $f(u) = (f_1(u), \ldots, f_m(u))$ the vector of nonlinearities.

	\begin{theorem}\label{large_diff}
		Assume \eqref{A1} and the initial data is bounded, and additionally, there exists $z_0\in \mathbb R^m$ with $f(z_0) = 0$. Let $M>0$. Suppose that there exist $R, L_M>0$ such that if $r\geq R$, $\|u_{i,0}\|_{L^\infty(\Omega)} \leq M$ for all $i=1,\ldots, m$, and $u$ solves \eqref{sys_u}, then $\sup_{t\geq 0}\|u(t)\|_{L^1(\Omega)} \leq L_M$. Then there exists constants $B_M, d_M$ such that if $d_i \geq d_M$ for all $i=1,\ldots, m$ then the solution to \eqref{S} exists globally and satisfies
		\begin{equation*}
			\sup_{t\geq 0}\|u_i(t)\|_{L^\infty(\Omega)} \leq B_M \quad \text{ for all } \quad i=1,\ldots, m.
		\end{equation*}
	\end{theorem}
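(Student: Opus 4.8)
The plan is to exploit the observation, already made in the paper, that the truncated system~\eqref{sys_u} admits a global, uniformly bounded classical solution $u$, and that this $u$ solves the original system~\eqref{S} for as long as it remains inside the ball $B_0(r)\subset\mathbb R^m$. Hence it suffices to produce an a priori $L^\infty$ bound on the truncated solution that is \emph{strictly below} $r$ once the diffusion coefficients are large. Once this is achieved, $\Phi_r(u)\equiv 1$ along the whole trajectory, so $u$ in fact solves~\eqref{S}, and global existence together with the uniform bound for~\eqref{S} follow from uniqueness of its local solution under~\eqref{A1}. (The existence of an equilibrium $z_0$ will not enter the estimates below in my route; it is relevant chiefly for verifying the $L^1$ hypothesis in the applications.)

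To bound the truncated solution I would split each component into its spatial average and a zero-average fluctuation, writing $u_i=\bar u_i+v_i$ with $\bar u_i(t)=\frac{1}{|\Omega|}\int_\Omega u_i(\cdot,t)\,dx$ and $\int_\Omega v_i(\cdot,t)\,dx=0$. The average is controlled for free by the standing hypothesis: since $u$ solves~\eqref{sys_u} with $r\ge R$ and $\|u_{i,0}\|_{L^\infty(\Omega)}\le M$, we have $\sup_{t\ge0}\|u(t)\|_{L^1(\Omega)}\le L_M$, whence
\[
|\bar u_i(t)|\le \frac{1}{|\Omega|}\|u_i(t)\|_{L^1(\Omega)}\le \frac{L_M}{|\Omega|}
\]
uniformly in $t$ and, crucially, uniformly in the diffusion coefficients.

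For the fluctuation I would use the Duhamel representation
\[
v_i(t)=e^{t d_i\Delta}(I-P)u_{i,0}+\int_0^t e^{(t-s)d_i\Delta}(I-P)\bigl[\Phi_r(u)f_i(u)\bigr]\,ds,
\]
where $P$ denotes the projection onto constants, together with the decay estimate $\|e^{\sigma d_i\Delta}(I-P)g\|_{L^\infty(\Omega)}\le C e^{-\lambda_1 d_i\sigma}\|g\|_{L^\infty(\Omega)}$ for the Neumann heat semigroup, $\lambda_1>0$ being the first nonzero Neumann eigenvalue of $-\Delta$ and $C$ independent of $d_i$. Because the truncated nonlinearity is globally bounded, $\|\Phi_r(u)f_i(u)\|_{L^\infty(\Omega)}\le K_r:=\max_i\sup_{|z|\le 2r}|f_i(z)|$, integrating the decay estimate gives
\[
\|v_i(t)\|_{L^\infty(\Omega)}\le C M\, e^{-\lambda_1 d_i t}+\frac{2CK_r}{\lambda_1 d_i}\le CM+\frac{2CK_r}{\lambda_1 d_i}.
\]
Combining with the average bound yields $\|u_i(t)\|_{L^\infty(\Omega)}\le A+2CK_r/(\lambda_1 d_i)$ for all $t\ge0$, where $A:=L_M/|\Omega|+CM$.

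It then remains to fix the two free parameters in the correct order. I would first choose the truncation radius $r:=\max\{R,\ \sqrt m\,A+1\}$, which in turn fixes $K_r$, and then choose $d_M\ (\ge 1)$ so large that $\sqrt m\,\bigl(A+2CK_r/(\lambda_1 d_M)\bigr)<r$. For $d_i\ge d_M$ this forces $|u(x,t)|\le\sqrt m\,\max_i\|u_i(t)\|_{L^\infty(\Omega)}<r$ for every $(x,t)$, so the truncation is never active and the theorem follows with $B_M:=r$. The main obstacle is precisely this interplay between $r$ and $d_M$: enlarging $r$ to switch off the truncation simultaneously enlarges the bound $K_r$ on the nonlinearity, and one must check that the factor $1/d_i$ gained from large diffusion genuinely dominates $K_r$; this is why $r$ must be selected first (depending only on $M,L_M,\Omega,m$) and $d_M$ afterwards (depending on $r$). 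A secondary technical point is the $d_i$-uniform decay estimate for $e^{\sigma d_i\Delta}(I-P)$ on $L^\infty(\Omega)$, which I would obtain by rescaling time via $\tau=\sigma d_i$ to reduce to $e^{\tau\Delta}(I-P)$, treating $\tau\ge1$ through ultracontractivity composed with the $L^2$ spectral gap and $\tau\in(0,1)$ through the $L^\infty$-contractivity of the Neumann semigroup.
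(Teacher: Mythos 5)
Your argument is correct, but it takes a genuinely different---and considerably more elementary---route than the paper's. The paper rescales time by $a=d_{\min}^{-1}$ so that all diffusion coefficients become $\geq 1$, derives an $L^2$ energy estimate and the parabolic embedding into $L^{2(n+2)/n}$, and then runs an induction on integrability exponents using the scaled maximal-regularity estimate $\|\Delta\phi\|_{p}\leq C(p)d^{-1}\|\theta\|_{p}$ of Lemma \ref{lem:scale} together with the time cutoffs $\psi_{\tau+k}$; at every stage the Lipschitz bound $|\Phi_r(v)f_i(v)|\leq L_r|v|$ (after translating $z_0$ to the origin) is interpolated against the $L^1$ hypothesis, producing a bound of the form $\|v\|_{\infty}\leq C\,[aL_r+(1+2aL_r)^{1/2}]\,L_M^{1/2}\,\|v\|_{\infty}^{\varepsilon}$ with $\varepsilon<1$, which is closed by Young's inequality plus a separate Gronwall argument on the initial layer $[0,K+1]$. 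You instead split $u_i$ into its spatial average, controlled directly by the $L^1$ hypothesis, and a mean-zero fluctuation, controlled by the spectral-gap decay $\|e^{\sigma d_i\Delta}(I-P)\|_{L^\infty\to L^\infty}\leq Ce^{-\lambda_1 d_i\sigma}$ together with the crude sup bound $K_r$ on the truncated nonlinearity; the factor $(\lambda_1 d_i)^{-1}$ gained by integrating the decay plays exactly the role that the smallness condition $aL_r\leq 1$ plays in the paper. Both proofs hinge on the same ordering of parameters---fix $r$ from $M$, $L_M$, $\Omega$ first, then take $d_M$ large depending on $r$---and your identification of this as the crux is accurate. What your route buys is the complete avoidance of the bootstrap and of the equilibrium $z_0$ (which the paper uses only to obtain the Lipschitz bound, and which becomes essential in Theorem \ref{small_data}, where one needs the nonlinearity to vanish as $r\to 0$; your $K_r$ would serve the same purpose there after translating $z_0$ to the origin). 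What it costs is reliance on the $L^\infty$ smoothing and decay of the Neumann semigroup rather than on the duality/maximal-regularity toolkit that the rest of the paper is built on. The only steps to write out in full are the $d_i$-uniform decay estimate (ultracontractivity composed with the $L^2$ spectral gap for rescaled times $\tau\geq 1$, $L^\infty$-contractivity for $\tau<1$, exactly as you indicate) and the short continuity argument identifying the truncated solution with the solution of \eqref{S} once it is confined strictly inside $B_0(r)$.
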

	\begin{remark}
		We would like to remark here that the nonnegativity of solutions is not needed for Theorem \ref{large_diff}.
	\end{remark}
	To prove Theorem \ref{large_diff}, we first use a scaling argument and maximal regularity of the heat equation to obtain the estimate
	\begin{equation*}
		\|\Delta \phi\|_{L^p(\Omega\times(0,T))} \leq \frac{C(p)}{d}\|\theta\|_{L^p(\Omega\times(0,T))}
	\end{equation*}
	for any $p\in (1,\infty)$, where $\phi$ solves $\phi_t - d\Delta \phi = \theta$ with homogeneous Neumann boundary condition $\nabla \phi \cdot \nu = 0$ and zero initial data $\phi(x,0) = 0$. This already hints that the larger the diffusion coefficient is, the better control we have on the second spatial derivatives, or more precisely the Laplacian, of the solution. Next, as mentioned above, the solutions to \eqref{sys_u} and \eqref{S} coincide in the ball $B_0(r)$. Therefore, if we can obtain a bound for the solution to \eqref{sys_u} which is independent of $r$, then it is also a bound for the solution of \eqref{S}, and the global existence and uniform bound for \eqref{S} follows. To obtain such a bound, we apply a bootstrap argument to \eqref{sys_u}, again with the trick of multiplying with a truncated function to remove the initial data, together with some rescaled unknowns, to ultimately obtain for all $i=1,\ldots, m$
	\begin{equation*}
		\|u_i\|_{L^\infty(\Omega\times[0,\infty))} \leq K(aL_r, M)
	\end{equation*}
	where the constant $K$ depends increasingly on $M$ and the product $aL_r$, in which $a = \min\{d_i\}^{-1} \leq d_M^{-1}$, and $L_r$ is the common Lipschitz constant of the functions $\Phi_r(u)f_i(u)$ for all $i=1,\ldots, m$. Note that if $aL_r \leq 1$ then $K(aL_r, M) \leq K(1,M)$ which is {\it independent of $r$}. We thus process as follows: first, we choose $r$ large enough so that $r \geq K(1,M)$, then choose $d_M$ large enough such that $aL_r \leq 1$. Thanks to the arguments above, the solution to \eqref{sys_u} remains in the ball $B_0(K(1,M))\subset \mathbb R^m$, and this gives a bound that for solutions of \eqref{sys_u} and \eqref{S}.
	\begin{remark}\label{remark1}\hfill\
	\begin{itemize}
	\item 
	The largeness of the diffusion coefficients depends on the size of the initial data. The global existence of strong solutions with large diffusion coefficients regardless of the size of initial data remains open.
	
	\item Theorem \ref{large_diff} does not impose the dissipation of mass condition \eqref{A3}, but instead only a uniform-in-time control for the $L^1$-norm, and the nonlinearities need not to be polynomial. This allows us to deal with a larger class of systems. Consider, for instance,
	\begin{equation*}
	\begin{cases}
		\partial_t u - d_1\Delta u = (-u + 2v)e^v - uve^{u^2}, &x\in\Omega,\\
		\partial_t v - d_2\Delta v = -v^2e^v + u^2ve^{u^2}, &x\in\Omega,\\
		\nabla u \cdot \nu = \nabla v \cdot \nu = 0, &x\in\partial\Omega,\\
		u(x,0) = u_0(x), v(x,0) = v_0(x), &x\in\Omega.
	\end{cases}
	\end{equation*}
	It's obvious to check that \eqref{A1} is satisfied, while \eqref{A3} and \eqref{A4} are not. Nevertheless, by multiplying the first equation by $u$ and summing with the second equation, one gets
	\begin{equation*}
		\frac{d}{dt}\int_{\Omega}(\frac 12u^2 + v)dx + d_1\int_{\Omega}|\nabla u|^2dx = -\int_{\Omega}e^{v}(u-v)^2dx \leq 0
	\end{equation*}
	and therefore
	\begin{equation*}
		\frac 12\|u(t)\|_{L^2(\Omega)} + \|v(t)\|_{L^1(\Omega)} \leq \frac 12\|u_0\|_{L^2(\Omega)} + \|v_0\|_{L^1(\Omega)}.
	\end{equation*}
	This, together with $\|u(t)\|_{L^1(\Omega)} \leq C\|u(t)\|_{L^2(\Omega)}$, shows that the result of Theorem \ref{large_diff} can be applied.
	\end{itemize}
	\end{remark}
	The first part of Remark \ref{remark1} suggests that if the diffusion coefficients are fixed, then when the initial data are close enough to a zero of the nonlinearity, which in turn means that the diffusion is very large in comparison with initial data, then we also obtain the same result as Theorem \ref{uniform_bound}.
	\begin{theorem}\label{small_data}
	Assume \eqref{A1} and initial data is bounded and moreover, there exists $z_0 \in \mathbb R^m$ with $f(z_0) =0$. Assume that the diffusion coefficients are fixed. Then there exist $M>0$ and $L_M>0$ small enough such that if $\|u_{i,0} - z_{i0}\|_{L^\infty(\Omega)} \leq M$ and $\sup_{t\geq 0}\|u_i(t)\|_{L^1(\Omega)} \leq L_M$ for all $i=1,\ldots, m$, then the solution to \eqref{S} is global and uniformly bounded in time.
	\end{theorem}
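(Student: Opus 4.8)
The plan is to reduce the statement to the mechanism already developed for Theorem~\ref{large_diff}, exploiting that ``small data near a zero of $f$'' is, after normalization, the same situation as ``diffusion large compared with the data''. First I would shift the unknown by the equilibrium: setting $v_i = u_i - z_{i0}$, the system \eqref{S} becomes
\begin{equation*}
	\partial_t v_i - d_i \Delta v_i = g_i(v), \qquad \nabla v_i \cdot \nu = 0, \qquad v_i(\cdot,0) = u_{i0} - z_{i0},
\end{equation*}
where $g_i(v) := f_i(v + z_0)$ satisfies $g_i(0) = f_i(z_0) = 0$ and is locally Lipschitz by \eqref{A1}; as in Theorem~\ref{large_diff}, neither nonnegativity nor mass dissipation is used. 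A preliminary observation is that the equilibrium itself is forced to be small: evaluating the $L^1$ bound at $t=0$ gives $\|u_{i0}\|_{L^1(\Omega)} \le L_M$, and combined with $\|u_{i0}-z_{i0}\|_{L^\infty(\Omega)}\le M$ the triangle inequality yields $|z_{i0}|\,|\Omega| \le L_M + M|\Omega|$. Consequently
\begin{equation*}
	\sup_{t\ge 0}\|v_i(t)\|_{L^1(\Omega)} \le \sup_{t\ge 0}\|u_i(t)\|_{L^1(\Omega)} + |z_{i0}|\,|\Omega| \le 2L_M + M|\Omega| =: \widetilde L,
\end{equation*}
which is small once $M$ and $L_M$ are small. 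The goal becomes a uniform-in-time $L^\infty$ bound on $v$.

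The central point is that, since $g(0)=0$, on any ball $B_0(\rho)\subset\R^m$ the nonlinearity obeys the \emph{linear} estimate $|g(v)| \le \ell_\rho |v|$, where $\ell_\rho$ denotes the Lipschitz constant of $g$ on $B_0(\rho)$ (non-increasing as $\rho\to0$ and bounded by the local Lipschitz constant of $f$ at $z_0$). Feeding this into the bootstrap used in the proof of Theorem~\ref{large_diff} --- the maximal-regularity inequality $\|\Delta\phi\|_{L^p} \le \frac{C(p)}{d}\|\theta\|_{L^p}$ combined with Sobolev embedding, iterated through finitely many integrability exponents starting from the $L^1$ control $\widetilde L$ --- I expect to obtain, \emph{as long as the solution remains in} $B_0(\rho)$, an estimate of the form
\begin{equation*}
	\sup_{t\ge 0}\|v_i(t)\|_{L^\infty(\Omega)} \le \Lambda\,\widetilde L, \qquad i=1,\dots,m,
\end{equation*}
where $\Lambda = \Lambda(\ell_\rho, d_1,\dots,d_m, n, \Omega)$ is finite for the fixed diffusion coefficients. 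As in Theorems~\ref{uniform_bound} and~\ref{large_diff}, the bound is rendered uniform in time rather than merely local by multiplying by the time-cutoff $\psi_\tau$ (equivalently, by running Duhamel from time $t-1$), so that the uniform-in-time $L^1$ bound $\widetilde L$ can be reinjected on every time window.

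It then remains to close the a priori estimate by a continuation argument. I would fix $\rho$ small enough that $\ell_\rho$ (hence $\Lambda$) is frozen at a finite value, and only afterwards choose $M$ and $L_M$ so small that $M < \rho$ and $\Lambda\,\widetilde L = \Lambda(2L_M + M|\Omega|) < \rho$. Setting $T^\ast = \sup\{t : \|v(s)\|_{L^\infty(\Omega)} \le \rho \text{ for all } s \le t\}$, the estimate above is valid on $[0,T^\ast)$ and forces $\|v(t)\|_{L^\infty(\Omega)} \le \Lambda\widetilde L < \rho$ there; the strict inequality permits continuation past $T^\ast$, so $v$ never exits $B_0(\rho)$, the solution cannot blow up in finite time, and global existence together with the uniform bound $\sup_{t\ge0}\|u_i(t)-z_{i0}\|_{L^\infty(\Omega)} \le \Lambda\widetilde L$ follows.

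The main obstacle I anticipate is the circularity between the radius $\rho$ and the constant $\Lambda$: the linear control $|g(v)|\le \ell_\rho|v|$, and hence the value of $\Lambda(\ell_\rho,\dots)$, is only available while the solution stays inside $B_0(\rho)$, which is exactly what the estimate is supposed to guarantee. Breaking this loop is precisely the quantitative content of ``the diffusion being very large compared with the data'': one must fix $\rho$ first to freeze $\ell_\rho$ and $\Lambda$, and then shrink the thresholds $M, L_M$ relative to $\rho/\Lambda$. A second, more technical, difficulty is upgrading the bootstrap from a local-in-time $L^\infty$ bound to a genuinely uniform-in-time one out of a uniform-in-time $L^1$ bound, for which the $\psi_\tau$ device and the $d$-dependent maximal-regularity estimate are essential, exactly as in the proofs of Theorems~\ref{uniform_bound} and~\ref{large_diff}.
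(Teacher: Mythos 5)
Your proposal is correct and follows essentially the same route as the paper: the paper also reduces to the machinery of Theorem~\ref{large_diff}, observes that the bound $B_M$ in \eqref{B_M} tends to $0$ with $M$ and $L_M$ while the Lipschitz constant $L_r$ of the truncated nonlinearity is frozen (and small) once the radius $r$ is fixed, and then shrinks the data thresholds so that the solution never leaves the ball of radius $r$. The only packaging difference is that the paper works with the truncated system \eqref{sys_u} (which is globally solvable a priori) and transfers the bound back to \eqref{S}, whereas you run a continuation/exit-time argument directly on \eqref{S}; these are equivalent, and your explicit handling of the shift by $z_0\neq 0$ in the $L^1$ bound is if anything more careful than the paper's ``without loss of generality $z_0=0$''.
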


	One crucial common condition in Theorems \ref{large_diff} and \ref{small_data} is the uniform-in-time bound of the $L^1$-norm of solutions, which is independent of diffusion coefficients. Due to \eqref{uniform_L1}, this condition is easily satisfied if the mass dissipation \eqref{A3} holds. Therefore, we have the following corollary.
	\begin{corollary}\label{cor:dissipation}
	Assume \eqref{D}--\eqref{A3} for the system \eqref{S}. Then 
	\begin{itemize}
	\item[(i)] 	for any $M\geq 0$, there exists $d_M>0$ such that if $\|u_{i,0}\|_{L^\infty(\Omega)} \leq M$ and $d_i \geq d_M$ for all $i=1,\ldots, m$, then the solution to \eqref{S} is global and uniformly bounded in time.
		\item[(ii)] for any fixed diffusion coefficients, there exists $0 < M \ll 1$ such that if $\|u_{i,0}\|_{L^\infty(\Omega)} \leq M$ for all $i=1,\ldots, m$, then the solution to \eqref{S} is global and uniformly bounded in time.
	\end{itemize}
	\end{corollary}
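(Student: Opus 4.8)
The plan is to derive both parts as direct consequences of the two preceding theorems, the entire work being to certify that the uniform-in-time $L^1$ bound demanded by their hypotheses is supplied for free by the mass-dissipation structure \eqref{A3}. Accordingly, I would open by recording the elementary but decisive observation that, under \eqref{A2} and \eqref{A3}, the origin is automatically a steady state: evaluating quasi-positivity at $z=0$ gives $f_i(0) \geq 0$ for every $i$, while \eqref{A3} forces $\sum_i f_i(0) \leq 0$; together these yield $f_i(0) = 0$ for all $i$, so we may always take $z_0 = 0$ in Theorems \ref{large_diff} and \ref{small_data}.

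For part (i), I would invoke Theorem \ref{large_diff} with $z_0 = 0$, and the only nontrivial point is to produce the constant $L_M$ controlling $\sup_{t\geq 0}\|u(t)\|_{L^1(\Omega)}$ for solutions of the \emph{truncated} system \eqref{sys_u}, uniformly in $r \geq R$ and independently of the diffusion coefficients. I would first note that the truncated nonlinearities $\Phi_r(u)f_i(u)$ inherit quasi-positivity, since $\Phi_r \geq 0$ and $f_i(z) \geq 0$ whenever $z_i = 0$; hence the solution of \eqref{sys_u} stays nonnegative. With nonnegativity in hand, summing \eqref{sys_u} over $i$, integrating over $\Omega$, and using the Neumann condition gives
\begin{equation*}
\frac{d}{dt}\sum_{i=1}^m\int_{\Omega}u_i\,dx = \int_{\Omega}\Phi_r(u)\sum_{i=1}^m f_i(u)\,dx \leq 0,
\end{equation*}
again because $\Phi_r \geq 0$ and $\sum_i f_i(u) \leq 0$ on $\R_+^m$. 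Thus $\sum_i\|u_i(t)\|_{L^1(\Omega)} \leq \sum_i\|u_{i0}\|_{L^1(\Omega)} \leq m|\Omega|M$, which furnishes the required $L_M := m|\Omega|M$, free of $r$ and of $d_i$. Theorem \ref{large_diff} then delivers the global existence and the uniform $L^\infty$ bound once $d_i \geq d_M$.

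For part (ii), with the diffusion coefficients now fixed, I would apply Theorem \ref{small_data}, again with $z_0 = 0$. The theorem supplies thresholds $M$ and $L_M$; it then suffices to choose the initial-data bound $\widetilde M := \min\{M, L_M/(m|\Omega|)\}$. If $\|u_{i0}\|_{L^\infty(\Omega)} \leq \widetilde M$ for all $i$, then trivially $\|u_{i0} - z_{i0}\|_{L^\infty(\Omega)} = \|u_{i0}\|_{L^\infty(\Omega)} \leq M$, while the mass estimate \eqref{uniform_L1} together with nonnegativity gives $\|u_i(t)\|_{L^1(\Omega)} \leq \sum_j\int_{\Omega}u_{j0}\,dx \leq m|\Omega|\widetilde M \leq L_M$ for all $t$. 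Both hypotheses of Theorem \ref{small_data} then hold, and the conclusion follows.

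The proof is essentially bookkeeping, so I do not anticipate a genuine obstacle; the one place that requires care is part (i), where the $L^1$ control must be established for the truncated auxiliary system \eqref{sys_u} rather than for \eqref{S} itself. The key is that the truncation factor $\Phi_r$ is nonnegative, so it disturbs neither quasi-positivity nor the sign of $\sum_i f_i(u)$, and the resulting bound $m|\Omega|M$ is manifestly independent of both the truncation radius $r$ and the diffusion coefficients — precisely the uniformity that Theorem \ref{large_diff} requires.
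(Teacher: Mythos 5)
Your proposal is correct and follows essentially the same route as the paper: deduce $f(0)=0$ from \eqref{A2} and \eqref{A3}, obtain the uniform $L^1$ bound $L_M = m|\Omega|M$ for the truncated system by summing and integrating, and then invoke Theorems \ref{large_diff} and \ref{small_data}. Your explicit check that $\Phi_r(u)f_i(u)$ preserves quasi-positivity (hence nonnegativity of the truncated solution, needed before applying \eqref{A3}) and your explicit choice of the threshold in part (ii) are details the paper leaves implicit, but the argument is the same.
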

	
	We would like to point out that the point (ii) of Corollary \ref{cor:dissipation} was proved in \cite{FHM97} in an even more general context (see Theorem 2.6 and Proposition 4.1 therein).

	Another important corollary of Theorem \ref{large_diff} is that one can compare the trajectory of the reaction-diffusion system \eqref{S} to the corresponding ordinary differential system.
	\begin{corollary}\label{system_for_averages}
		Assume that the assumptions in Theorem \ref{large_diff} or \eqref{D}--\eqref{A3} are satisfied. Then there exists $d_{\min}$ such that, if $d_i \geq d_{\min}$ for all $i=1,\ldots, m$ then there exist $C,\lambda>0$ such that
		\begin{equation}\label{decay}
			\sum_{i=1}^{m}\|u_i(t) - \overline{u}_i(t)\|_{L^\infty(\Omega)} \leq Ce^{-\lambda t}
		\end{equation}
		for all $t\geq 0$, where the spatial average is defined by $\overline{u}_i = \frac{1}{|\Omega|}\int_{\Omega}u_i dx$. Moreover, the vector of averages $\overline{u} = (\overline{u}_i)_{i=1\ldots m}$ satisfies the differential system
		\begin{equation}\label{ubar}
			\pa_t \overline{u}= f(\overline{u}) + g(t)
		\end{equation}
		where $g(t): \mathbb R \to \mathbb R^m$ is a function decaying exponentially, i.e. $|g(t)| \leq Ce^{-\gamma t}$ for some $C, \gamma>0$.
	\end{corollary}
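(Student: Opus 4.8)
The plan is to prove the two assertions in turn, both resting on the uniform-in-time $L^\infty$ bound already guaranteed by Theorem \ref{large_diff} (respectively Corollary \ref{cor:dissipation}(i) under \eqref{D}--\eqref{A3}): once $d_i \geq d_M$, the solution is global with $\sup_{t\geq 0}\|u_i(t)\|_{L^\infty(\Omega)} \leq B_M$ for a constant $B_M$ independent of the particular (large) diffusion coefficients. Consequently $f(u(t))$ stays in a fixed compact set, so by \eqref{A1} there is a Lipschitz constant $L = L(B_M)$ with $|f(a) - f(b)| \leq L|a-b|$ for all $a,b$ in the relevant ball; crucially $L$ depends only on $M$, not on the $d_i$, which is what prevents any circular dependence in the choice of $d_{\min}$ below.

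First I would establish exponential convergence to the spatial average in $L^2$. Writing $w_i = u_i - \overline{u}_i$, averaging the $i$-th equation over $\Omega$ and using the Neumann condition gives $\partial_t \overline{u}_i = \overline{f_i(u)}$, hence $w_i$ solves $\partial_t w_i - d_i\Delta w_i = h_i$ with $h_i := f_i(u) - \overline{f_i(u)}$ and $\int_\Omega w_i\,dx = 0$. Since $\overline{u}$ is spatially constant, $f_i(\overline{u})$ equals its own average, so $h_i$ is the zero-average projection of $f_i(u) - f_i(\overline{u})$ and therefore $\|h_i\|_{L^2(\Omega)} \leq L\|w\|_{L^2(\Omega)}$, where $w = (w_1,\ldots,w_m)$. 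Testing the equation for $w_i$ against $w_i$, summing in $i$, and invoking the Poincar\'e--Wirtinger inequality $\|\nabla w_i\|_{L^2}^2 \geq \lambda_1\|w_i\|_{L^2}^2$ (with $\lambda_1>0$ the first nonzero Neumann eigenvalue of $-\Delta$) yields
\begin{equation*}
\frac12\frac{d}{dt}\|w\|_{L^2(\Omega)}^2 + d_{\min}\lambda_1\|w\|_{L^2(\Omega)}^2 \leq \sqrt{m}\,L\,\|w\|_{L^2(\Omega)}^2 .
\end{equation*}
Choosing $d_{\min} > \max\{d_M,\ \sqrt{m}\,L/\lambda_1\}$ makes $\lambda := d_{\min}\lambda_1 - \sqrt{m}\,L$ positive, and Gr\"onwall's lemma gives $\|w(t)\|_{L^2(\Omega)} \leq \|w(0)\|_{L^2(\Omega)}\, e^{-\lambda t}$. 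This is precisely the step where the largeness of diffusion is used: the spectral gap of the diffusion must beat the Lipschitz contribution of the reaction.

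Next I would upgrade this to the $L^\infty$ decay \eqref{decay}. Because $h_i$ is bounded in $L^\infty$ uniformly in time (as $f(u)$ is) and decays in $L^2$, interpolation gives exponential decay of $\|h_i(t)\|_{L^p(\Omega)}$ and $\|w(t)\|_{L^p(\Omega)}$ for every $p\in[2,\infty)$ at a (possibly smaller) rate $\lambda'>0$; fix $p > \max\{2, n/2\}$. Applying Duhamel's formula on the window $[t-1,t]$ to $\partial_t w_i - d_i\Delta w_i = h_i$ and using ultracontractivity of the Neumann heat semigroup, $\|e^{d_i\tau\Delta}g\|_{L^\infty(\Omega)} \leq C(d_i\tau)^{-\frac{n}{2p}}\|g\|_{L^p(\Omega)}$, one bounds $\|w_i(t)\|_{L^\infty(\Omega)}$ by $\|w_i(t-1)\|_{L^p(\Omega)}$ plus the convolution of $(t-s)^{-\frac{n}{2p}}$ against $\|h_i(s)\|_{L^p(\Omega)}$; the kernel is integrable since $n/(2p)<1$, and since both $\|w_i(t-1)\|_{L^p}$ and $\sup_{s\in[t-1,t]}\|h_i(s)\|_{L^p}$ are $O(e^{-\lambda' t})$, we obtain $\|w_i(t)\|_{L^\infty(\Omega)} \leq Ce^{-\lambda' t}$, which yields \eqref{decay}. (Alternatively one may iterate the maximal-regularity estimate of Lemma \ref{maximal_reg} in a finite bootstrap $L^2 \to L^{p_1}\to\cdots\to L^\infty$, valid in every dimension.)

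Finally, the averaged system \eqref{ubar} is immediate: from $\partial_t\overline{u}_i = \overline{f_i(u)}$ set $g_i(t) := \overline{f_i(u)} - f_i(\overline{u})$, so that $\partial_t\overline{u} = f(\overline{u}) + g(t)$, and
\begin{equation*}
|g_i(t)| = \Bigl|\tfrac{1}{|\Omega|}\!\int_\Omega \bigl(f_i(u) - f_i(\overline{u})\bigr)\,dx\Bigr| \leq \tfrac{L}{|\Omega|}\,\|w(t)\|_{L^1(\Omega)} \leq C e^{-\gamma t},
\end{equation*}
with $\gamma = \lambda'$, using the decay of $w$ already established and $\|w\|_{L^1}\le|\Omega|\,\|w\|_{L^\infty}$. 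I expect the only genuinely delicate point to be the $L^\infty$ upgrade in high dimensions---keeping a positive exponential rate through the smoothing/bootstrap while ensuring every constant remains independent of the (large) diffusion coefficients---whereas the $L^2$ spectral-gap estimate and the derivation of \eqref{ubar} are routine once the uniform bound $B_M$, and with it the dimension-independent Lipschitz constant $L$, is in hand.
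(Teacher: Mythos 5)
Your proposal is correct and follows essentially the same route as the paper: an $L^2$ energy estimate combined with the Poincar\'e--Wirtinger inequality and the uniform $L^\infty$ bound (which supplies a Lipschitz constant independent of the diffusion coefficients), choosing $d_{\min}$ large enough that the spectral gap dominates the reaction, then interpolation to $L^p$ with $p>n/2$ and a Duhamel argument on a unit time window with the ultracontractive heat semigroup bound to upgrade to $L^\infty$, and finally the identification $g_i = \overline{f_i(u)} - f_i(\overline{u})$ with the same decay estimate. No substantive differences to report.
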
	
	
	\medskip
	To highlight the importance of our results, we devote the third part of this paper to some applications. The first application is about the Global Attractor Conjecture (or GAC for short). GAC is one of the central problems in chemical reaction network theory as it provides the large time asymptotics of a very large class of reaction networks. More precisely, GAC states that the positive complex balanced equilibrium is the global attractor within the compatibility class for any complex balanced system (see more details in Subsection \ref{GAC}). There exists a large amount of work partly solving the GAC, and also a recent proposed full proof \cite{Cra}. However, most (if not all) of them dealt with the ODE setting, i.e. the chemical concentrations are homogeneous spatially. Recent works concerning the PDE setting considered only special systems, due to the fact that even the global well-posedness for those PDE systems are largely open, see e.g. \cite{DFT17,FT18a,CJPT,PSY19}. Here we will use Corollary \ref{system_for_averages} to show that the GAC holds true for reaction systems with large diffusion, as long as it holds true for the corresponding differential systems.
	
	The second application of our results concerns the convergence to equilibrium for a reversible chemical reaction system possessing boundary equilibria
	\begin{equation*}
	\alpha_1 A_1 + \ldots + \alpha_m A_m \underset{k_f}{\overset{k_b}{\leftrightharpoons}} \beta_1 A_1 + \ldots + \beta_m A_m.
	\end{equation*}	
	Of course the issue is resolved when diffusion is large as the GAC is valid in this case. However, with smaller diffusion the question is much more delicate. It was shown in \cite{PSY19} that the trajectory to this system either converges exponentially to the positive equilibrium or to the boundary $\partial\mathbb R_+^m$. Under the assumption that the solution is bounded uniformly in time, it can be shown that the positive equilibrium is the only attracting point. Therefore, we can apply here Theorem \ref{uniform_bound} to obtain the uniform bounds of the solution provided the diffusion coefficients are close enough to each other.
	
	Our results also have an application in the so-called close-to-equilibrium regularity for reaction-diffusion systems. In recent works \cite{CC17} and \cite{Tang18}, it was shown that in small dimensions, $n\leq 4$, one gets global strong solutions for systems with restricted polynomial nonlinearities assuming initial data to be close enough to an equilibrium in the sense $L^2$-distance. By Corollary \ref{cor:dissipation} we remove the restrictions on the growth of nonlinearities and smallness of dimensions, provided the closeness to equilibrium is measured in $L^\infty$-distance.
	
	\medskip
	{\bf The plan of the paper} is as follows: the next two sections prove the main results, namely Theorem \ref{uniform_bound} is proved in Section \ref{quasi} while Theorems \ref{large_diff} and \ref{small_data} and their corollaries are proved in Section \ref{thm2}. Section \ref{appl} is devoted to the applications. 
	
	\medskip
	{\bf Notation:} 
	Throughout this paper, we use the following notation: For any $1\leq p \leq \infty$, the norm in $L^p(\Omega)$ is denoted by $\|\cdot\|_{p,\Omega}$. The Bochner space $L^p(\tau,T;L^p(\Omega))$ is written as $L^p(\Omega\times(\tau,T))$ and associated with the norm
	\begin{equation*}
		\|f\|_{p,\Omega\times(\tau,T)} = \left[\int_\tau^T\int_{\Omega}|f(x,t)|^pdxdt\right]^{1/p} \quad \text{ for } 1\leq p < \infty,
	\end{equation*}
	and
	\begin{equation*}
		\|f\|_{\infty,\Omega\times(\tau,T)} = \underset{\Omega\times(\tau,T)}{\mathrm{ess}\sup}|f(x,t)|.
	\end{equation*}
	The space $W^{2,1,p}(\Omega\times(\tau,T))$ consists of functions $f\in L^p(\Omega\times(\tau,T))$ whose following norm is finite
	\begin{equation*}
		\|f\|_{p,\Omega\times(\tau,T)}^{(2,1)} = \sum_{2r+s\leq 2}\|\pa_t^r\pa_x^sf\|_{p,\Omega\times(\tau,T)} < +\infty.
	\end{equation*}	
	
	If a function $u$ depends both on $x\in \Omega$ and $t\in\R$, then we write for convenient $u(t)$ instead of $u(\cdot, t)$.
	
	For any constant $1< p < \infty$, we denote by $\wt{p}$ the H\"older conjugate exponent of $p$, i.e.
	\begin{equation*}
		\wt{p} = \frac{p}{p-1}.
	\end{equation*}

\section{Quasi-uniform diffusion coefficients}\label{quasi}
\begin{definition}[Classical (or strong) solutions] 
	A vector of concentrations $u = (u_1,\ldots, u_m)$ is called a classical (or strong) solution on $(0,T)$ to \eqref{S} if for all $i=1,\ldots, m$, $u_i \in C([0,T);L^p(\Omega))\cap C^2((0,T)\times\overline{\Omega})$ for all $p>n$ and $u$ satisfies each equation in \eqref{S} pointwise.
\end{definition}
\begin{theorem}[Local existence of solutions]\cite{Ama85}
	If the initial data is bounded and \eqref{A1} holds, the system \eqref{S} has a unique local classical (or strong) solution on some maximal interval $(0,T_{\max})$. Moreover, if
	\begin{equation*}
		\lim_{t\to T_{\max}^-}\|u_i(t)\|_{\infty,\Omega} < +\infty, \quad \text{ for all } \quad i=1,\ldots, m,
	\end{equation*}
	then $T_{\max} = +\infty$. In addition, if \eqref{D} and \eqref{A2} hold then the solution is componentwise nonnegative.
\end{theorem}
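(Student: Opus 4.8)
The plan is to recast \eqref{S} as an abstract semilinear evolution equation and appeal to the standard analytic-semigroup machinery behind \cite{Ama85}, combined with a contraction mapping argument. Fix $p>n$, so that the smoothing of the heat semigroup lets one control $\|u_i(t)\|_{\infty,\Omega}$ from $L^p$ data. For each $i$ the operator $A_i:=-d_i\Delta$ under homogeneous Neumann conditions is sectorial on $L^p(\Omega)$ and generates an analytic, contraction, positivity-preserving semigroup $(e^{-tA_i})_{t\geq 0}$. Writing $u=(u_1,\dots,u_m)$ and using Duhamel's formula, a solution is a fixed point of
\begin{equation*}
	(\Gamma u)_i(t) = e^{-tA_i}u_{i,0} + \int_0^t e^{-(t-s)A_i} f_i(u(s))\,ds , \qquad A_i := -d_i\Delta .
\end{equation*}
First I would show that, on a closed ball of $C([0,T];L^p(\Omega)^m)$ centered at the data, $\Gamma$ is a self-map and a contraction for $T$ small. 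This uses only the local Lipschitz continuity \eqref{A1} (on the ball $f$ is globally Lipschitz with some constant $L$) and the semigroup bound $\|e^{-tA_i}\|_{\mathcal L(L^p)}\leq 1$, which give $\|\Gamma u-\Gamma v\|\leq CT L\,\|u-v\|$. Banach's theorem then yields a unique local mild solution, and uniqueness in the class of bounded solutions follows from a Gronwall estimate on $\|u-v\|$.

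Next I would bootstrap regularity to reach the required class $u_i\in C([0,T);L^p(\Omega))\cap C^2((0,T)\times\overline\Omega)$. Since $u$ is bounded on compact time intervals and $f$ is locally Lipschitz, $f_i(u)\in L^\infty_{\mathrm{loc}}(0,T;L^p(\Omega))$; the parabolic smoothing estimates $\|e^{-tA_i}g\|_{W^{2,p}}\leq C\,t^{-1+\theta}\|g\|_{L^p}$ upgrade $u$ to Hölder continuity and then, iterating with interior parabolic $L^p$/Schauder theory, to a genuine classical solution. The local existence time in the contraction step depends only on a bound for $\sup_i\|u_i(t)\|_{\infty,\Omega}$; hence the solution extends as long as this quantity stays finite. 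This produces the maximal interval $(0,T_{\max})$ and the blow-up criterion: if $\sup_{t<T_{\max}}\|u_i(t)\|_{\infty,\Omega}<\infty$ for every $i$, then restarting the fixed point argument from a time near $T_{\max}$ extends the solution beyond $T_{\max}$, contradicting maximality, so $T_{\max}=+\infty$.

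Finally, for componentwise nonnegativity under \eqref{D} and \eqref{A2}, I would replace each $f_i$ by the truncated nonlinearity $\widehat f_i(u):=f_i(u_1^+,\dots,u_m^+)$, where $s^+:=\max\{s,0\}$; since $s\mapsto s^+$ is $1$-Lipschitz, $\widehat f_i$ is again locally Lipschitz and the previous steps produce a solution of the modified system. The key structural point is that wherever $u_i^-:=\max\{-u_i,0\}>0$ one has $u_i<0$, hence $u_i^+=0$, so the argument of $\widehat f_i$ lies in $\R_+^m$ with vanishing $i$-th entry, and quasi-positivity \eqref{A2} forces $\widehat f_i(u)\geq 0$ there. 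Testing $\pa_t u_i-d_i\Delta u_i=\widehat f_i(u)$ against $-u_i^-$, integrating over $\Omega$, using the Neumann condition to discard the nonnegative diffusion term $d_i\|\nabla u_i^-\|_{2,\Omega}^2$, and summing gives
\begin{equation*}
	\frac12\,\frac{d}{dt}\sum_{i=1}^m\|u_i^-(t)\|_{2,\Omega}^2 \leq -\sum_{i=1}^m\int_{\Omega}\widehat f_i(u)\,u_i^-\,dx \leq 0 .
\end{equation*}
Since $u_{i,0}\geq 0$ implies $u_i^-(0)\equiv 0$, this yields $u_i^-\equiv 0$, i.e. $u_i\geq 0$; then $u_i^+=u_i$, so $\widehat f_i(u)=f_i(u)$ and the modified solution coincides with the original by uniqueness. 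The main obstacle is precisely this last step: it is the only place where the structural hypotheses \eqref{D} and \eqref{A2} enter, and because the diffusion coefficients differ one cannot collapse the system to a scalar comparison (as in the equal-diffusion case), so the truncation device is essential to convert quasi-positivity into the favorable sign on the coupled reaction term; everything else is the routine semigroup construction of \cite{Ama85}.
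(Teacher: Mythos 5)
The paper offers no proof of this statement at all---it is quoted directly from \cite{Ama85}---so there is nothing in-text to compare against line by line; your outline (Duhamel formula, Banach fixed point, regularity bootstrap, truncation of the nonlinearity for positivity) is exactly the standard machinery behind that citation, and your nonnegativity step---replacing $f_i(u)$ by $f_i(u_1^+,\dots,u_m^+)$, testing with $-u_i^-$, discarding the gradient term, and invoking \eqref{A2} on the set where $u_i^->0$---is correct as written.

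There is, however, one genuine gap in the existence step. You set up the contraction on a ball of $C([0,T];L^p(\Omega)^m)$ and assert that ``on the ball $f$ is globally Lipschitz with some constant $L$.'' Local Lipschitz continuity of $f:\R^m\to\R^m$ controls $f$ on bounded subsets of $\R^m$, not on bounded subsets of $L^p(\Omega)^m$: a function with $\|u\|_{p,\Omega}\leq 1$ carries no pointwise bound, and since this theorem assumes only \eqref{A1}---no growth restriction such as \eqref{A4}---the substitution operator $u\mapsto f(u)$ need not even map $L^p(\Omega)$ into $L^p(\Omega)$ (take $f(u)=e^{u^2}$ as in the example of Remark \ref{remark1}, or $f(u)=u^k$ with $k>1$ evaluated at $u=\lambda\chi_E$ with $|E|$ small). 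So the self-map and contraction estimates $\|\Gamma u-\Gamma v\|\leq CTL\|u-v\|$ fail in the space you chose. The repair is standard and uses precisely the hypothesis you have: the initial data lies in $L^\infty(\Omega)$ by \eqref{D}, the Neumann heat semigroup is a contraction on $L^\infty(\Omega)$ by the maximum principle, and on an $L^\infty$-ball the Nemytskii operator \emph{is} Lipschitz with the constant of $f$ on the corresponding bounded subset of $\R^m$. Running the fixed point in $L^\infty(\Omega\times(0,T))$ (intersected with $C([0,T];L^p(\Omega))$ to match the paper's solution class, since continuity at $t=0$ holds only in $L^p$ for merely bounded data) gives the local mild solution, makes the existence time depend only on $\sup_i\|u_{i,0}\|_{\infty,\Omega}$, and yields the stated $L^\infty$ blow-up criterion immediately. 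With that single change of ambient space, the rest of your argument goes through.
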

\begin{lemma}[Embedding inequalities]\cite{LSU68}\label{embedding}
	Let $1<p<\infty$. 
	\begin{enumerate}
		\item[(i)] if $p\leq \frac{n+2}{2}$ then it holds for all $f\in W^{2,1,p}(\Omega\times(\tau,T))$
		\begin{equation*}
		\|f\|_{q,\Omega\times(\tau,T)} \leq C(p,T-\tau)\|f\|_{p,\Omega\times(\tau,T)}^{(2,1)} \quad \text{ for all } \quad 1\leq q < \frac{(n+2)p}{n+2-2p}.
		\end{equation*}
		We use the convention $\frac{1}{0} = +\infty$ in the case $p = \frac{n+2}{2}$.
		\item[(ii)] if $p > \frac{n+2}{2}$ then
		\begin{equation*}
		\|f\|_{\infty,\Omega\times(\tau,T)} \leq C(p,T-\tau)\|f\|_{p,\Omega\times(\tau,T)}^{(2,1)}.
		\end{equation*}
	\end{enumerate}
	The constant $C(p,T-\tau)$ depends only on $\Omega$, $p$ and $T-\tau$.
\end{lemma}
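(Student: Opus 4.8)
The plan is to read $W^{2,1,p}$ as a Sobolev space of order $2$ in the \emph{parabolic} dimension $N:=n+2$, the exponent forced by the anisotropic scaling $(x,t)\mapsto(\lambda x,\lambda^2 t)$ under which a parabolic cylinder of radius $r$ has measure $\sim r^{N}$. With this bookkeeping the claimed integrability exponent is exactly the Sobolev exponent for a two-derivative gain in dimension $N$: the relation $\frac1q=\frac1p-\frac{2}{n+2}$ gives $q=\frac{(n+2)p}{n+2-2p}$, case (i) is the subcritical regime $p<N/2$, case (ii) is the supercritical regime $p>N/2$ in which the gain overshoots into $L^\infty$, and $p=\frac{n+2}{2}$ is precisely the borderline where every finite $q$ but not $q=\infty$ is reached. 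I would realize the embedding as a mapping property of a parabolic Bessel potential after first reducing to the whole space $\R^{n+1}$.

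\emph{Step 1 (Extension).} Using the $C^{2+\alpha}$-regularity of $\pa\Omega$ I would flatten the boundary locally and extend by even reflection in the normal variable, and extend in time by a Hestenes-type reflection across $\tau$ and $T$, producing a bounded operator $E\colon W^{2,1,p}(\Omega\times(\tau,T))\to W^{2,1,p}(\R^{n}\times\R)$ with $\|Ef\|^{(2,1)}_{p,\R^{n}\times\R}\le C(\Omega,p,T-\tau)\,\|f\|^{(2,1)}_{p,\Omega\times(\tau,T)}$; this is exactly where the dependence of the constant on $\Omega$ and on the interval length $T-\tau$ enters. \emph{Step 2 (Multiplier equivalence and potential representation).} On $\R^{n+1}$ I would set $g:=(\pa_t-\Delta+1)f$, so that $\|g\|_{p}\le\|f\|^{(2,1)}_{p}$; conversely, writing $f=(\pa_t-\Delta+1)^{-1}g$, the symbols $\frac{\xi_j\xi_k}{1+|\xi|^2+i\eta}$ and $\frac{i\eta}{1+|\xi|^2+i\eta}$ (with $\xi$ dual to $x$, $\eta$ dual to $t$) satisfy the anisotropic Mikhlin--H\"ormander conditions, so $\pa_t f$ and $\pa_x^2 f$ are controlled in $L^p$ by $\|g\|_p$ for every $1<p<\infty$, giving $\|f\|^{(2,1)}_p\sim\|g\|_p$. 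The operator $(\pa_t-\Delta+1)^{-1}$ is convolution against the kernel $K(x,t)=e^{-t}(4\pi t)^{-n/2}e^{-|x|^2/4t}\mathbf{1}_{t>0}$, whose exponential factor gives global integrability and whose local singularity is $K\sim\rho^{-n}=\rho^{-(N-2)}$ in the parabolic distance $\rho\sim|x|+\sqrt{t}$.

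\emph{Step 3 (Parabolic Hardy--Littlewood--Sobolev).} It remains to bound convolution with a kernel of local order $\rho^{-(N-2)}$ and rapid decay. Since the parabolic measure is Ahlfors $N$-regular, this is the Riesz-potential estimate of order $2$ in a space of homogeneous type of dimension $N$: it maps $L^p\to L^q$ with $\frac1q=\frac1p-\frac2N$ when $1<p<N/2$ (split the convolution into a near part dominated by the maximal function and a far part bounded by H\"older, then optimize the cut-off radius), which is case (i). For case (ii) I would note that $\rho^{-(N-2)}\in L^{\wt p}_{\mathrm{loc}}$ iff $(N-2)\wt p<N$, i.e. iff $p>N/2$, so H\"older directly yields $\|K*g\|_{\infty}\le\|K\|_{\wt p}\|g\|_p$. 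Composing Steps 1--3 and restricting back to $\Omega\times(\tau,T)$ gives both estimates; the endpoint $p=\frac{n+2}{2}$, where $q$ may be any finite exponent, follows by a limiting argument from slightly smaller $p$ together with the borderline integrability of $K$.

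The main obstacle is to capture the genuinely parabolic gain — dimension $n+2$ rather than $n$ — which forces one to use the regularizing effect of $\pa_t f$ and not merely the spatial second derivatives. Technically this sits in Step 2: the correct equivalence $\|f\|^{(2,1)}_p\sim\|(\pa_t-\Delta+1)f\|_p$ rests on the anisotropic Calder\'on--Zygmund/Mikhlin theory and on the homogeneity bookkeeping in the parabolic distance $\rho$. A purely spatial Sobolev embedding applied slicewise in $t$ would see only the exponent $\frac{np}{n-2p}$ and would miss the improvement coming from the time derivative.
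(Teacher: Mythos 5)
The paper offers no proof of this lemma: it is quoted verbatim from Lady\v{z}enskaja--Solonnikov--Ural'ceva \cite{LSU68} (Lemma II.3.3 there), whose original argument proceeds through explicit integral representations and interpolation inequalities for the anisotropic spaces $W^{2,1,p}$. Your route --- extend to $\R^{n}\times\R$, replace the norm $\|f\|^{(2,1)}_{p}$ by $\|(\pa_t-\Delta+1)f\|_{p}$ via anisotropic Mikhlin/Calder\'on--Zygmund theory, and then run a Hardy--Littlewood--Sobolev estimate for the kernel of $(\pa_t-\Delta+1)^{-1}$ viewed as a Riesz potential of order $2$ in the parabolic dimension $N=n+2$ --- is a legitimate and essentially standard modern proof of the same statement, and it correctly identifies why the gain is governed by $n+2$ rather than $n$ (the time derivative must enter, which your Step 2 encodes). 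Two small repairs are needed. First, in Step 1 a plain even reflection in the normal variable does \emph{not} preserve second spatial derivatives (the extended normal derivative is odd, hence generically discontinuous in its own derivative across the boundary); you need a Hestenes-type higher-order reflection in the \emph{spatial} normal direction, whereas in time a single even reflection already suffices since only one time derivative is controlled --- you have the two roles interchanged. Second, in case (i) and at the borderline $p=\frac{n+2}{2}$ the constant produced by your limiting argument necessarily depends on $q$ (it blows up as $q$ approaches the critical exponent), so the clean dependence $C(p,T-\tau)$ claimed in the lemma should be read as $C(p,q,T-\tau)$; this is an imprecision of the statement as quoted, not of your argument. With these adjustments the sketch is sound, and it is arguably more transparent than the classical reference about where each exponent comes from.
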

\begin{lemma}[Maximal regularity]\cite{Lam87}\label{maximal_reg}
	Let $0< \tau < T$ and $p\in (1,\infty)$. Assume that $0\leq \theta\in L^p(\Omega\times(\tau,T))$ and $\|\theta\|_{p,\Omega\times(\tau,T)} = 1$. Let $\phi$ be the weak solution to 
	\begin{equation}\label{phi_eq}
		\begin{cases}
		\pa_t\phi + d\Delta \phi = -\theta, &\text{ in } \Omega\times(\tau,T),\\
		\nabla \phi \cdot \nu = 0, &\text{ on } \pa\Omega\times(\tau,T),\\
		\phi = 0, &\text{ on } \Omega\times\{T\}.
		\end{cases}
	\end{equation}
	Then $\phi \geq 0$,
	\begin{equation*}
		\|\phi\|_{p,\Omega\times(\tau,T)}^{(2,1)} \leq C_{T-\tau, d, p}
	\end{equation*}
	and
	\begin{equation*}
		\|\Delta \phi\|_{p,\Omega\times(\tau,T)} \leq  C_{d,p}.
	\end{equation*}
	where $C_{d,p}$ is an optimal constant depending on $p$ and the diffusion coefficient $d$, and not depending on $\tau, T$.
\end{lemma}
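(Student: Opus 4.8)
The plan is to reduce \eqref{phi_eq} to a standard forward heat equation and then invoke the maximal $L^p$-regularity of the Neumann heat semigroup, the decisive point being that the constant controlling the second-order term can be taken \emph{independent of the length of the time interval}, with the explicit $d$-dependence extracted by a scaling.

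First I would reverse time, setting $s = T-t$ and $\psi(x,s) = \phi(x,T-s)$. This turns \eqref{phi_eq} into the forward problem $\pa_s\psi - d\Delta\psi = \theta$ on $\Omega\times(0,T-\tau)$ with homogeneous Neumann boundary condition and zero initial datum $\psi(\cdot,0)=0$. Since $\theta\geq 0$ and the initial datum vanishes, the parabolic maximum principle together with the Neumann condition yields $\psi\geq 0$, hence $\phi\geq 0$. For the regularity estimates I would use that $d\Delta$ with Neumann boundary conditions generates a bounded analytic semigroup of contractions on $L^p(\Omega)$, so that the heat operator enjoys maximal $L^p$-regularity. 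Writing $\psi(s)=\int_0^s e^{(s-r)d\Delta}\theta(r)\,dr$ via Duhamel's formula, the map $\theta\mapsto d\Delta\psi$ is bounded on $L^p(\Omega\times(0,\infty))$ with a constant depending only on $p$ in the case $d=1$; this interval-independence is precisely the feature established in \cite{Lam87}, and it rests on the contractivity and positivity structure of the semigroup rather than on a finite-interval estimate.

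To produce the sharp $d$-dependence I would then rescale time by $w(x,\sigma)=\psi(x,\sigma/d)$, which converts the equation into $\pa_\sigma w - \Delta w = \tfrac1d\,\theta(\cdot,\cdot/d)$ on the rescaled interval $(0,d(T-\tau))$, reducing everything to the case $d=1$. Carefully tracking the Jacobian of the time substitution in the space–time $L^p$ integrals, the factor $d^{-p}$ coming from the rescaled source combines with the time-scaling factors to leave exactly $\|\Delta\phi\|_{p,\Omega\times(\tau,T)} \leq \frac{C(p)}{d}\|\theta\|_{p,\Omega\times(\tau,T)} = \frac{C(p)}{d}$, which is the claimed $\tau,T$-independent bound $C_{d,p}$. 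Since $\pa_t\phi = -\theta - d\Delta\phi$, the same estimate controls $\|\pa_t\phi\|_{p,\Omega\times(\tau,T)}$ uniformly as well.

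Finally, for the full $W^{2,1,p}$ bound I would observe that the lower-order contributions $\|\phi\|_p$ and $\|\nabla\phi\|_p$, unlike the Laplacian term, may accumulate over long horizons (the Neumann mean of $\psi$ grows like the time integral of the mean of $\theta$), so estimating them by short-time maximal regularity, or by the semigroup representation together with a Poincaré inequality on the mean-zero part, produces the constant $C_{T-\tau,d,p}$ that is \emph{allowed} to depend on $T-\tau$. The main obstacle is exactly the interval-independence of the second-order estimate: this does not follow softly from abstract maximal regularity on a finite interval, whose constant generically degrades as the interval lengthens, but relies on the special structure of the Neumann heat semigroup exploited in \cite{Lam87}, combined with the scaling above to isolate the optimal factor $1/d$.
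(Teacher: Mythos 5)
The paper does not actually prove this lemma --- it is quoted from \cite{Lam87}, with only the remark that the substitution $s=T-t$ turns \eqref{phi_eq} into a forward heat problem --- and your sketch correctly reconstructs the intended argument: time reversal plus the parabolic maximum principle for $\phi\geq 0$, Lamberton's interval-independent maximal $L^p$-regularity of the Neumann heat semigroup for the second-order bound, and the observation that the lower-order terms (which accumulate the mean of $\theta$ over time) can only be controlled with a constant depending on $T-\tau$, e.g.\ via H\"older in time using $\phi(\cdot,T)=0$. Your scaling step extracting the factor $1/d$ is precisely the computation the authors carry out themselves in Lemma \ref{lem:scale}, so your route coincides with the paper's own machinery and I see no gap.
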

\begin{remark}
	At first glance, equation \eqref{phi_eq} might appear to be a backwards heat equation. However, the substitution $s = T-t$ reveals that this is not the case.
\end{remark}
\begin{proposition}\label{global_1}
Define
\begin{equation}\label{def_AB}
d_{\max} = \max_{i=1,\ldots,m}\{d_i\}, \qquad d_{min} = \min_{i=1,\ldots, m}\{d_i\}, \qquad \text{ and } \qquad d = \frac{d_{\max}+d_{\min}}{2}.
\end{equation}
If $p' = \frac{p}{p-1}$ and 
\begin{equation*}
\frac{d_{\max}-d_{\min}}{2}C_{d,\wt{p}} < 1 \quad \text{ for some } \quad p > \frac{(\mu-1)(n+2)}{2},
\end{equation*}
where $\mu$ is given in \eqref{A4}, then \eqref{S} has a unique global classical solution. 
\end{proposition}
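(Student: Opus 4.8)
The plan is to first derive, for each fixed $T<T_{\max}$, an a priori bound on $\|u_i\|_{p,\Omega\times(0,T)}$ by a duality argument that exploits the smallness condition $\tfrac{d_{\max}-d_{\min}}{2}C_{d,\wt p}<1$, then to promote this $L^p$-bound to an $L^\infty$-bound by a bootstrap using the polynomial growth \eqref{A4}, the maximal regularity of Lemma \ref{maximal_reg}, and the embeddings of Lemma \ref{embedding}, and finally to invoke the blow-up criterion to conclude $T_{\max}=+\infty$. Throughout set $z=\sum_{i=1}^m u_i\ge 0$; summing the equations in \eqref{S} and writing $\sum_i d_i\Delta u_i = d\,\Delta z + \sum_i(d_i-d)\Delta u_i$ with $d$ as in \eqref{def_AB} gives
\begin{equation*}
\pa_t z - d\,\Delta z = \sum_{i=1}^m(d_i-d)\Delta u_i + \sum_{i=1}^m f_i(u) \le \sum_{i=1}^m(d_i-d)\Delta u_i,
\end{equation*}
where the inequality uses \eqref{mass_control} and $|d_i-d|\le \tfrac{d_{\max}-d_{\min}}{2}$ for every $i$.

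The core step is the $L^p$-estimate. Fix $p$ as in the statement and let $\wt p$ be its conjugate. Given any $0\le\theta\in L^{\wt p}(\Omega\times(0,T))$ with $\|\theta\|_{\wt p,\Omega\times(0,T)}=1$, let $\phi$ be the corresponding dual solution of \eqref{phi_eq} with diffusion $d$; by Lemma \ref{maximal_reg} one has $\phi\ge 0$ and $\|\Delta\phi\|_{\wt p,\Omega\times(0,T)}\le C_{d,\wt p}$. Testing the differential inequality for $z$ against $\phi$ and integrating by parts in space and time (using the Neumann conditions, $\phi(T)=0$, and moving the Laplacian off $u_i$ onto $\phi$) yields
\begin{equation*}
\int_0^T\!\!\int_\Omega z\,\theta \;\le\; \int_\Omega z(0)\phi(0)\,dx \;+\; \sum_{i=1}^m(d_i-d)\int_0^T\!\!\int_\Omega u_i\,\Delta\phi.
\end{equation*}
Because $0\le u_i\le z$ and $|d_i-d|\le\tfrac{d_{\max}-d_{\min}}{2}$, Hölder bounds the double sum by $\tfrac{d_{\max}-d_{\min}}{2}\,\|z\|_{p,\Omega\times(0,T)}\,\|\Delta\phi\|_{\wt p,\Omega\times(0,T)}\le \tfrac{d_{\max}-d_{\min}}{2}C_{d,\wt p}\,\|z\|_{p,\Omega\times(0,T)}$; while integrating the dual equation over $\Omega$ gives $\int_\Omega\phi(0)=\int_0^T\!\int_\Omega\theta\le (|\Omega|T)^{1/p}$, so the boundary term is at most $\|z(0)\|_{\infty,\Omega}(|\Omega|T)^{1/p}$. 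Taking the supremum over admissible $\theta$ and using $\|z\|_{p,\Omega\times(0,T)}=\sup_\theta\int_0^T\!\int_\Omega z\,\theta$ (finite for $T<T_{\max}$ since the solution is classical) gives
\begin{equation*}
\Bigl(1-\tfrac{d_{\max}-d_{\min}}{2}C_{d,\wt p}\Bigr)\|z\|_{p,\Omega\times(0,T)}\;\le\; \|z(0)\|_{\infty,\Omega}\,(|\Omega|T)^{1/p},
\end{equation*}
and the smallness hypothesis makes the prefactor positive, so $\|u_i\|_{p,\Omega\times(0,T)}\le\|z\|_{p,\Omega\times(0,T)}\le C(T)$ for all $i$. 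This absorption step, where closeness of the diffusion coefficients is essential, is what I expect to be the main obstacle, both in selecting the dual problem with the averaged coefficient $d$ and in justifying the integrations by parts.

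With an $L^p$-bound in hand I would bootstrap. By \eqref{A4}, $|f_i(u)|\le C(1+|u|^\mu)$, so $u\in L^q$ implies $f_i(u)\in L^{q/\mu}$ with controlled norm; applying maximal regularity to $\pa_t u_i-d_i\Delta u_i=f_i(u)$ places $u_i\in W^{2,1,q/\mu}(\Omega\times(0,T))$, and Lemma \ref{embedding} then gives $u_i\in L^{\Psi(q/\mu)}$ with $\Psi(s)=\tfrac{(n+2)s}{n+2-2s}$ (and $u_i\in L^\infty$ once $q/\mu>\tfrac{n+2}{2}$). A direct computation shows $\Psi(q/\mu)>q$ precisely when $q>\tfrac{(\mu-1)(n+2)}{2}$, which is exactly the threshold imposed on the starting exponent $p$. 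Hence the iterates $q_0=p$, $q_{k+1}=\Psi(q_k/\mu)$ increase strictly; since the only fixed point of $q\mapsto\Psi(q/\mu)$ lies at $\tfrac{(\mu-1)(n+2)}{2}<p$ and $\Psi(q/\mu)\to+\infty$ as $q\uparrow\tfrac{(n+2)\mu}{2}$, the sequence cannot converge below $\tfrac{(n+2)\mu}{2}$ and therefore crosses it after finitely many steps, at which point Lemma \ref{embedding}(ii) yields $\|u_i\|_{\infty,\Omega\times(0,T)}\le C(T)<\infty$ for every $i$. Finally, the bound $C(T)$ stays finite as $T\uparrow T_{\max}$ whenever $T_{\max}<\infty$ (all $T$-dependent factors, e.g. $(|\Omega|T)^{1/p}$ and the embedding and regularity constants, are finite at $T=T_{\max}$), so $\limsup_{t\to T_{\max}^-}\|u_i(t)\|_{\infty,\Omega}<\infty$, contradicting the blow-up alternative of the local existence theorem unless $T_{\max}=+\infty$. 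Uniqueness is inherited from the local theory under \eqref{A1}, which establishes the global classical solution asserted in Proposition \ref{global_1}.
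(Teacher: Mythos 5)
Your proposal is correct and follows essentially the same route as the paper: the same duality argument with the averaged coefficient $d$, absorption of the $\|\sum_i u_i\|_{p}$ term via the smallness hypothesis, and the identical maximal-regularity/embedding bootstrap driven by the threshold $p>\frac{(\mu-1)(n+2)}{2}$. The only cosmetic deviations are that you sum the equations into a single inequality for $z$ before testing (the paper tests each equation and then sums) and that you bound the initial-data term via $\int_\Omega\phi(0)=\int_0^T\!\int_\Omega\theta$ rather than via $\|\phi(0)\|_{\wt p,\Omega}\lesssim T^{(\wt p-1)/\wt p}\|\pa_t\phi\|_{\wt p,\Omega\times(0,T)}$; both are equivalent in substance.
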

\begin{proof}
	By adding to \eqref{S} the new equation
		\begin{equation*}
			\partial_t u_{m+1} - \Delta u_{m+1} = -\sum_{i=1}^mf_i(u), \quad  \nabla u_{m+1}\cdot \nu =0, \quad u_{m+1}(x,0) = 0
		\end{equation*}
	we can assume \eqref{mass_control} (now for the new vector of concentrations $(u_1,\ldots, u_m, u_{m+1})$) with an {\it equality sign}. Therefore, the result of this Proposition follows from \cite[Proposition 1.4]{CDF14}. We will reproduce the proof here since it is needed in the proof of Theorem \ref{uniform_bound}.

	\medskip
	Let $p > (\mu-1)(n+2)/2$ as in the assumption of the Proposition. Pick $0\leq \theta \in L^{\wt{p}}(\Omega\times(0,T))$ such that $\|\theta\|_{\wt{p},\Omega\times(0,T)} =1$ and let $\phi$ be the solution to \eqref{phi_eq}. Since $\theta \geq 0$ we have $\phi \geq 0$. By integration by parts and the homogeneous Neumann boundary conditions $\nabla u_i \cdot \nu = 0$ and $\nabla \phi \cdot \nu = 0$ we have
	\begin{align*}
		\int_0^T\int_{\Omega}u_i\theta dxdt &= \int_0^T\int_{\Omega}u_i(-\pa_t \phi - d\Delta \phi)dxdt\\
		&= \int_{\Omega}u_i(x,0)\phi(0)dx + (d_i-d)\int_0^T\int_{\Omega}u_i\Delta \phi dxdt + \int_0^T\int_{\Omega}\phi f_i(u)dxdt.
	\end{align*}
	Summing this equality with respect to $i$ we obtain
	\begin{equation}\label{e0}
	\begin{aligned}
		\sum_{i=1}^m\int_0^T\int_{\Omega}u_i\theta dxdt \leq \sum_{i=1}^m\int_\Omega u_i(x,0)\phi(x,0)dx + \sum_{i=1}^m(d_i-d)\int_0^T\int_{\Omega} u_i\Delta \phi dxdt 
	\end{aligned}
	\end{equation}
	since $\phi \geq 0$ and $\sum_{i=1}^mf_i(u)\leq 0$. By H\"older's inequality
	\begin{equation}\label{e1}
		\sum_{i=1}^m\int_0^T\int_{\Omega}u_i\theta dxdt \leq \sum_{i=1}^m\|u_{i0}\|_{p,\Omega}\|\phi(\cdot,0)\|_{\wt{p},\Omega} + \frac{d_{\max}-d_{\min}}{2}\left\|\sum_{i=1}^mu_i\right\|_{p,\Omega\times(0,T)}\|\Delta\phi\|_{\wt{p},\Omega\times(0,T)}.
	\end{equation}
	From Lemma \ref{maximal_reg}, it follows that
	\begin{equation}\label{e2}
		\|\Delta \phi\|_{\wt{p},\Omega\times(0,T)} \leq C_{d,\wt{p}}.
	\end{equation}
	On the other hand, since $\phi(\cdot, T) = 0$, it follows from H\"older's inequality that
	\begin{align}\label{e3}
		\|\phi(\cdot,0)\|_{\wt{p},\O}^{\wt{p}} = \int_{\Omega}\left|\int_0^T\pa_t\phi dt\right|^pdx\leq T^{\frac{p-1}{p}}\|\pa_t\phi\|_{\wt{p},\Omega\times(0,T)}^{\wt{p}} \leq C_{d,\wt{p}}T^{\frac{\wt p -1}{\wt p}}
	\end{align}
	Inserting \eqref{e2} and \eqref{e3} into \eqref{e1}, and using duality lead to
	\begin{equation*}
		\left\|\sum_{i=1}^mu_i\right\|_{p,\Omega\times(0,T)} \leq C_{d,\wt p}^{1/\wt p}T^{\wt p- 1}\sum_{i=1}^m\|u_{i0}\|_{p,\Omega} + \frac{d_{\max}-d_{\min}}{2}C_{d,\wt p}\left\|\sum_{i=1}^mu_i\right\|_{p,\Omega\times(0,T)}.
	\end{equation*}
	By assumption $(d_{\max}-d_{\min})C_{d,\wt p}/2 < 1$ we get
	\begin{equation*}
		\left\|\sum_{i=1}^mu_i\right\|_{p,\Omega\times(0,T)} \leq CT^{\wt p - 1}
	\end{equation*}
	and using the nonnegativity of $u_i$, it follows that
	\begin{equation}\label{Lp_u}
		\|u_i\|_{p,\Omega\times(0,T)} \leq CT^{\wt p - 1} \quad \text{ for all } \quad i = 1,\ldots, m \quad \text{ and some } \quad p > \frac{(\mu-1)(n+2)}{2}.
	\end{equation}
	We now show that using the maximal regularity in Lemma \ref{maximal_reg}, one can bootstrap the integrability \eqref{Lp_u} up to $L^\infty$-estimate, and therefore obtain global existence.
	
	For simplicity, we will write $f\in L^{\alpha-}(\Omega\times(0,T))$ if $f\in L^{\beta}(\Omega\times(0,T))$ for all $\beta < \alpha$. Let's denote $p_0 = p$. From \eqref{Lp_u} and the growth condition \eqref{A4} we have
	\begin{equation*}
		\pa_t u_i - d_i\Delta u_i = f_i(u) \in L^{\frac{p_0}{\mu}}(\Omega\times(0,T)).
	\end{equation*}
	Using Lemma \ref{maximal_reg} we have $u_i \in W^{2,1,\frac{p_0}{\mu}}(\Omega\times(0,T))$. From the embedding Lemma \ref{embedding}, if $\frac{p_0}{\mu} > \frac{n+2}{2}$ then $u_i \in L^{\infty}(\Omega\times(0,T))$, and if $\frac{p_0}{\mu} \leq \frac{n+2}{2}$ then
	\begin{equation*}
		u_i \in L^{p_1-}(\Omega\times(0,T)) \quad \text{ with } \quad p_1 = \frac{(n+2)\frac{p_0}{\mu}}{n+2-2\frac{p_0}{\mu}}
	\end{equation*}
	with the convention $\frac 10 = +\infty$.
	Hence
	\begin{equation*}
		\pa_t u_i - d_i\Delta u_i = f_i(u)\in L^{\frac{p_1}{\mu}-}(\Omega\times(0,T)).
	\end{equation*}
	Repeating the above argument we have
	\begin{equation*}
		u_i \in L^{p_2-}(\Omega\times(0,T)) \quad \text{ with } \quad p_2 = \frac{(n+2)\frac{p_1}{\mu}}{n+2-2\frac{p_1}{\mu}}.
	\end{equation*}
	We therefore can construct a recursive sequence $\{p_k\}$ such that $u_i\in L^{p_k-}(\Omega\times(0,T))$ and 
	\begin{equation*}
		p_{k+1} = \frac{(n+2)\frac{p_k}{\mu}}{n+2-2\frac{p_k}{\mu}}
	\end{equation*}
	as long as $\frac{p_k}{\mu}\leq \frac{n+2}{2}$. Using
	\begin{equation*}
		\frac{p_{k+1}}{p_k} = \frac{n+2}{\mu(n+2)-2p_k}
	\end{equation*}
	and the fact that $p_0 > \frac{(\mu-1)(n+2)}{2}$ we have
	\begin{equation*}
		p_k > \left(\frac{(n+2)p_0}{\mu(n+2)-2p_0} \right)^k.
	\end{equation*}
	Therefore, there exists a $k_0$ such that $p_{k_0}/\mu > \frac{n+2}{2}$. Applying Lemma \ref{maximal_reg} again for
	\begin{equation*}
		\pa_t u_i - d_i\Delta u_i = f_i(u)\in L^{\frac{p_{k_0}}{\mu}}(\Omega\times(0,T))
	\end{equation*}
	we get the uniform bound $u_i\in L^\infty(\Omega\times(0,T))$, which completes the proof of Proposition \ref{global_1}.
\end{proof}


We are now in the position to prove the main result of this section.
\begin{proof}[Proof of Theorem \ref{uniform_bound}]
	Define $q_k = \left(\frac{n+2}{n+1}\right)^k$ for $k\in \mathbb N$ and let $K\in \mathbb N$ be the smallest number such that
	\begin{equation}\label{cond}
		q_K > \frac{(\mu-1)(n+2)}{2}.
	\end{equation}
	Now assume that 
	\begin{equation}\label{cond1}
		\frac{d_{\max} - d_{\min}}{2}C_{d,\wt{q_k}} < 1
	\end{equation} 
	for $k=1,\ldots, K$, where $d_{\max}, d_{\min}, d$ are defined in \eqref{def_AB}, $\wt{q_k}$ is the H\"older conjugate exponent of $q_k$, and $C_{d,\wt{q_k}}$ is the constant defined in Lemma \ref{maximal_reg}. 
	

	Thanks to \eqref{cond} and \eqref{cond1}, it follows immediately from Proposition \ref{global_1} that \eqref{S} has a unique global classical solution. It remains to prove that the $L^\infty$-norm is bounded uniformly in time. To do that we define an increasing smooth function $\psi: \R \to [0,1]$ as
	\begin{equation*}
		\psi(t) = \begin{cases}
			0 &\text{ for } t\leq 0,\\
			1 &\text{ for } t\geq 1
		\end{cases}
		\quad \text{ and } \quad |\psi'(t)| \leq 2 \quad \text{ for all } t\in \mathbb R.
	\end{equation*}
	For any $\tau \geq 0$, we define the shifted function $\psi_\tau(\cdot) = \psi(\cdot-\tau)$. Let $u = (u_1, u_2, \ldots, u_m)$ be the global classical solution to \eqref{S} obtained by Proposition \ref{global_1}. Direct computations lead to
	\begin{equation}\label{truncation}
		\pa_t(\psi_\tau u_i) - d_i\Delta(\psi_\tau u_i) = \psi_\tau'u_i + \psi_\tau f_i(u)
	\end{equation}
	for all $\tau \geq 0$. Let $M \in \mathbb N$ be a large integer, which will be fixed later. Since $\psi_\tau \geq 0$ and $\psi_\tau(\tau) = 0$, similar arguments to \eqref{e0} gives
	\begin{equation}\label{e4}
		\sum_{i=1}^m\int_{\tau}^{T}\int_{\Omega}\psi_\tau u_i \theta dxdt \leq \sum_{i=1}^m\int_{\tau}^{T}\int_{\Omega}\psi_\tau'u_i\phi dxdt + \frac{d_{\max}-d_{\min}}{2}\sum_{i=1}^m\int_{\tau}^{T}\int_{\Omega}\psi_\tau u_i \Delta \phi dxdt
	\end{equation}
	for any $0 < \tau < T$. Let $T = \tau + M$. Take $0\leq \theta \in L^{n+2}(\Omega\times(\tau,\tau+M))$ such that $\|\theta\|_{n+2,\Omega\times(\tau,\tau+M)} = 1$ we obtain from Lemma \ref{maximal_reg} 
	\begin{equation*}
		\|\Delta \phi\|_{n+2,\Omega\times(\tau,\tau+T)} \leq C_{d,n+2}
	\end{equation*}
	and Lemma \ref{embedding}
	\begin{equation*}
		\|\phi\|_{\infty,\Omega\times(\tau,\tau+M)} \leq C(n,M)\|\phi\|^{(2,1)}_{ n+2,\Omega\times(\tau,\tau+M)} \leq C_0(M).
	\end{equation*}
	Using $\|u_i\|_{L^\infty(0,\infty;L^1(\Omega))} \leq M_0$ we can use H\"older's inequality in \eqref{e4} to get
	\begin{align*}
		\int_{\tau}^{\tau+M}\int_{\Omega}\left(\sum_{i=1}^m\psi_\tau u_i\right) \theta dxdt &\leq 2\sum_{i=1}^{m}\|\phi\|_{\infty,\Omega\times(\tau,\tau+M)}\int_{\tau}^{\tau+M}\|u_i(t)\|_{1,\Omega}dt\\
		&\quad + \frac{d_{\max}-d_{\min}}{2}C_{d,n+2}\left\|\sum_{i=1}^{m}\psi_\tau u_i\right\|_{\frac{n+2}{n+1}, \Omega\times(\tau,\tau+M)}\\
		&\leq C_0(M, M_0, m) + \frac{d_{\max}-d_{\min}}{2}C_{d,n+2}\left\|\sum_{i=1}^{m}\psi_\tau u_i\right\|_{\frac{n+2}{n+1}, \Omega\times(\tau,\tau+M)}.
	\end{align*}
	By duality we get
	\begin{equation*}
		\left\|\sum_{i=1}^m\psi_\tau u_i\right\|_{\frac{n+2}{n+1},\Omega\times(\tau,\tau+M)} \leq C_0 + \frac{d_{\max}-d_{\min}}{2}C_{d,n+2}\left\|\sum_{i=1}^m\psi_\tau u_i\right\|_{\frac{n+2}{n+1},\Omega\times(\tau,\tau+M)}.
	\end{equation*}
	Since $\frac{B-A}{2}C_{d,n+2} < 1$ (applying \eqref{cond1} with $k=1$) we get
	\begin{equation*}
	\left\|\sum_{i=1}^m\psi_\tau u_i\right\|_{\frac{n+2}{n+1},\Omega\times(\tau,\tau+M)} \leq C_0
	\end{equation*}
	where $C_0$ is {\it independent of $\tau$} (but dependent on $M$). Since $\psi_\tau(t) = 1$ for all $t\geq \tau + 1$, we use the non-negativity of $u_i$ to get
	\begin{equation*}
		\|u_i\|_{\frac{n+2}{n+1}, \Omega\times(\tau+1,\tau+M)} \leq C_1 \quad \text{ for all } \quad i=1,\ldots, m
	\end{equation*}
	for $C_3$ independent of $\tau$. Recall that $q_k = \left(\frac{n+2}{n+1}\right)^k$. By induction we will prove that for all $k = 1,\ldots, K$,
	\begin{equation}\label{induction}
		\|u_i\|_{q_k, \Omega\times(\tau+k,\tau+M)} \leq C_k \quad \text{ for all } \quad i=1,\ldots, m,
	\end{equation}
	where $C_k$ is independent of $\tau$. Indeed, assume \eqref{induction} holds for some $k\geq 1$. Choose $0\leq \theta \in L^{\wt{q_{k+1}}}(\Omega\times(\tau+k,\tau+M))$ such that $\|\theta\|_{\wt{q_{k+1}},\Omega\times(\tau+k,\tau+M)} = 1$. If $\phi$ solves \eqref{phi_eq} we have, thanks to Lemma \ref{maximal_reg},
	\begin{equation}\label{qk+1}
		\|\Delta \phi\|_{\wt{q_{k+1}}, \Omega\times(\tau+k,\tau+M)} \leq C_{d,\wt{q_{k+1}}}
	\end{equation}
	and by Lemma \eqref{embedding}, 
	\begin{equation}\label{qk+1_1}
		\|\phi\|_{s,\Omega\times(\tau+k,\tau+M)} \leq C(M) \quad \text{ for all } \quad 1 \leq s < \frac{(n+2)\wt{q_{k+1}}}{n+2-2\wt{q_{k+1}}}.
	\end{equation}
	We now use H\"older's inequality in \eqref{e4} with $(\tau, T)$ is replaced by $(\tau+k,\tau+M)$, to have
	\begin{equation}
	\label{e5}
	\begin{aligned}
		&\int_{\tau+k}^{\tau+M}\int_{\Omega}\left(\sum_{i=1}^n\psi_{\tau+k} u_i\right) \theta dxdt\\
		&\leq 2\sum_{i=1}^{n}\|u_i\|_{q_k, \Omega\times(\tau+k,\tau+M)}\|\phi\|_{\wt{q_k}, \Omega\times(\tau+k,\tau+M)}\\
		&\quad + \frac{d_{\max}-d_{\min}}{2}\left\|\sum_{i=1}^n\psi_{\tau+k} u_i\right\|_{q_{k+1},\Omega\times(\tau+k,\tau+M)}\|\Delta\phi\|_{\wt{q_{k+1}},\Omega\times(\tau+k,\tau+M)}.
	\end{aligned}
	\end{equation}
	From \eqref{qk+1_1}, it remains to check that 
	\begin{equation}\label{e6}
		\wt{q_k} < \frac{(n+2)\wt{q_{k+1}}}{n+2-2\wt{q_{k+1}}}.
	\end{equation}
	This is equivalent to
	\begin{equation*}
		\frac{(n+2)^k}{(n+2)^k - (n+1)^k} < \frac{(n+2)^{k+1}}{(n+2)^{k+1} - (n+1)^{k+1} - 2(n+2)^k}
	\end{equation*}
	or
	\begin{equation*}
		(n+1)^k < 2(n+2)^k
	\end{equation*}
	which confirms \eqref{e6}. Therefore, by inserting \eqref{induction}, \eqref{qk+1}, \eqref{qk+1_1} into \eqref{e5} one gets
	\begin{equation*}
		\int_{\tau+k}^{\tau+M}\left(\sum_{i=1}^n\psi_{\tau+k} u_i\right)\theta dxdt \leq C(M) + \frac{d_{\max}-d_{\min}}{2}C_{d,\wt{q_{k+1}}}\left\|\sum_{i=1}^m\psi_{\tau+k} u_i\right\|_{q_{k+1},\Omega\times(\tau+k,\tau+M)}.
	\end{equation*}
	With $\frac{B-A}{2}C_{d,\wt{q_{k+1}}} < 1$ from \eqref{cond1} one gets by duality
	\begin{equation*}
		\left\|\sum_{i=1}^m\psi_{\tau+k} u_i\right\|_{q_{k+1},\Omega\times(\tau+k,\tau+M)} \leq C(M)
	\end{equation*}
	which implies
	\begin{equation*}
		\|u_i\|_{q_{k+1},\Omega\times(\tau+k+1,\tau+M)} \leq C(M) \quad \text{ for all } \quad i=1,\ldots, m,
	\end{equation*}
	and therefore the induction is complete. Taking $k = K$ we get 
	\begin{equation}\label{e7}
		\|u_i\|_{q_K, \Omega\times(\tau+K+1,\tau+M)} \leq C(M) \quad \text{ for all } \quad i =1,\ldots, m,
	\end{equation}
	where $C(M)$ is {\it independent of $\tau$}. Since $q_K = \left(\frac{n+2}{n+1}\right)^K > \frac{(\mu-1)(n+2)}{2}$ by assumption, we will bootstrap similarly to Proposition \ref{global_1}, with an extra effort to make the constants independent of time. For simplicity we denote by $J = K+1$ and define $p_0 = q_K$. From \eqref{truncation} we have
	\begin{equation}\label{truncation_1}
		\pa_t(\psi_{\tau+J}u_i) - d_i\Delta(\psi_{\tau+J}u_i) = \psi'_{\tau+J}u_i + \psi_{\tau+J}f_i(u).
	\end{equation}
	By \eqref{e7} and \eqref{A4} we have
	\begin{equation*}
		\|u_i\|_{\frac{p_0}{\mu},\Omega\times(\tau+J,\tau+M)} \leq C(M)\|u_i\|_{p_0,\Omega\times(\tau+J,\tau+M)} \leq C(M)
	\end{equation*}
	and
	\begin{equation*}
		\|f_i(u)\|_{\frac{p_0}{\mu},\Omega\times(\tau+J,\tau+M)} \leq C(M)(\|u\|_{p_0,\Omega\times(\tau+J,\tau+M)}^{\mu} + 1) \leq C(M).
	\end{equation*}
	Therefore, by applying Lemma \ref{maximal_reg} to \eqref{truncation_1} it leads to
	\begin{equation*}
		\|\psi_{\tau+J}u_i\|^{(2,1)}_{\frac{p_0}{\mu},\Omega\times(\tau+J,\tau+M)} \leq C(M)\|\psi'_{\tau+J}u_i + f_i(u)\|_{\frac{p_0}{\mu},\Omega\times(\tau+J,\tau+M)} \leq C(M).
	\end{equation*}
	With the embeddings in Lemma \ref{embedding} it yields
	\begin{equation*}
		\|\psi_{\tau+J}u_i\|_{s,\Omega\times(\tau+J,\tau+M)} \leq C(M) \quad \text{ for all } \quad s < p_1:= \frac{(n+2)\frac{p_0}{\mu}}{n+2-2\frac{p_0}{\mu}},
	\end{equation*}
	which leads to
	\begin{equation*}
		\|u_i\|_{s,\Omega\times(\tau+J+1,\tau+M)} \leq C(M) \quad \text{ for all } \quad s < p_1.
	\end{equation*}
	We therefore can construct a recursive sequence $\{p_k\}$ where 
	\begin{equation*}
		p_{k+1} = \frac{(n+2)\frac{p_k}{\mu}}{n+2 - 2\frac{p_k}{\mu}}
	\end{equation*}
	as long as $\frac{p_k}{\mu} \leq \frac{n+2}{2}$, such that
	\begin{equation*}
		\|u_i\|_{s,\Omega\times(\tau+J+n,\tau+M)} \leq C(M) \quad \text{ for all } \quad s < p_k.
	\end{equation*}
	Since $p_0 = q_K > \frac{(\mu-1)(n+2)}{2}$, similarly to the proof of Theorem \ref{global_1}, we see that $\{p_k\}$ is strictly increasing with $p_{k+1}/p_k > \frac{n+2}{\mu(n+2)-2p_0} > 1$. Therefore, there exists $k_0 \in \mathbb N$ such that $\frac{p_{k_0}}{\mu} > \frac{n+2}{2}$. By applying Lemma \ref{maximal_reg} to 
	\begin{equation*}
		\pa_{t}(\psi_{\tau+J+k_0}u_i) - d_i\Delta(\psi_{\tau+J+k_0}u_i) = \psi'_{\tau+J+k_0}u_i + \psi_{\tau+J+k_0}f_i(u)
	\end{equation*}
	with
	\begin{align*}
		&\|\psi'_{\tau+J+k_0}u_i + \psi_{\tau+J+k_0}f_i(u)\|_{\frac{p_{k_0}}{\mu},\Omega\times(\tau+J+k_0,\tau+M)}\\
		&\quad \leq C(M)\bigl(\|u_i\|_{\frac{p_{k_0}}{\mu},\Omega\times(\tau+J+k_0,\tau+M)}
		+ \|u_i\|_{p_{k_0},\Omega\times(\tau+J+k_0,\tau+M)}^\mu + 1\bigr) \leq C(M)
	\end{align*}
	we get
	\begin{equation*}
		\|\psi_{\tau+J+k_0}u_i\|_{\infty,\Omega\times(\tau+J+k_0, \tau+M)} \leq C(M)
	\end{equation*}
	which implies
	\begin{equation*}
		\|u_i\|_{\infty,\Omega\times(\tau+J+k_0+1, \tau+M)} \leq C(M)
	\end{equation*}
	for all $\tau\geq 0$ where $C(M)$ is a constant {\it independent of $\tau$}. By choosing $M = J+k_0+2$ we get 
	\begin{equation*}
		\|u_i\|_{\infty,\Omega\times(j,j+1)} \leq C(M) \quad \text{ for all } \quad  j \geq J+k_0+1.
	\end{equation*}
	Therefore, 
	\begin{equation*}
		\sup_{t\geq J+k_0+1}\|u_i(t)\|_{\infty,\Omega} \leq C(M) \quad \text{ for all } \quad i = 1,\ldots, N,
	\end{equation*}
	which completes the proof of Theorem \ref{uniform_bound}.
\end{proof}
\begin{remark}
	Due to the duality arguments in the proof of Proposition \ref{global_1}, we see that as the $\wt{q_k}$ is decreasing, or correspondingly $q_k$ is increasing, we obtain higher integrability of the solutions. It is therefore expected that the constant $C_{d,\wt{q_k}}$ is increasing as $k$ is increasing from $1$ to $K$. Thus it should suffice to assume \eqref{cond1} only for $C_{d,\wt{q_K}}$. However, estimating the constant $C_{d,p}$ in the maximal regularity seems to be a delicate issue. We leave the details for the interested reader.
\end{remark}

\section{Large diffusion coefficients or small initial data}\label{thm2}
The following lemma plays an important role in the analysis of this section.
\begin{lemma}\label{lem:scale}
	Let $d \geq 1$, $1<p<\infty$, $\theta\in L^p(\Omega\times(\tau,T))$, and $\phi$ be the solution to
	\begin{equation*}
		\begin{cases}
			\pa_t \phi - d\Delta \phi = \theta, &\text{ in } \Omega\times(\tau,T),\\
			\nabla \phi \cdot \nu = 0, &\text{ on } \pa\Omega\times(\tau,T),\\
			\phi(x,\tau) = 0, &\text{ in } \Omega.
		\end{cases}
	\end{equation*}
	Then there exists a constant $C(p)$ depending on $p$ such that
	\begin{equation}\label{scale_1}
		\|\Delta \phi\|_{p,\Omega\times(\tau,T)} \leq \frac{C(p)}{d}\|\theta\|_{p,\Omega\times(\tau,T)} \leq C(p)\|\theta\|_{p,\Omega\times(\tau,T)}
	\end{equation}
	and
	\begin{equation}\label{scale_2}
		\|\pa_t\phi\|_{p,\Omega\times(\tau,T)} \leq C(p)\|\theta\|_{p,\Omega\times(\tau,T)}.
	\end{equation}
\end{lemma}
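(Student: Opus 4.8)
The plan is to strip off the $d$-dependence by a time rescaling that normalizes the diffusion coefficient to $1$, then invoke the interval-length-independent maximal regularity already recorded in Lemma \ref{maximal_reg} to capture the explicit factor $1/d$ on the Laplacian, and finally read the time-derivative bound straight off the equation.

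First I would rescale time to absorb the coefficient $d$. Setting $s = d(t-\tau)$ and $\psi(x,s) = \phi(x,\tau + s/d)$ on $\Omega\times(0,d(T-\tau))$, a direct computation using $\pa_s\psi = \tfrac1d\,\pa_t\phi$ and $\Delta\psi = \Delta\phi$ turns $\pa_t\phi - d\Delta\phi = \theta$ into the normalized heat equation
\[
\pa_s\psi - \Delta\psi = \tfrac1d\,\widetilde\theta, \qquad \nabla\psi\cdot\nu = 0, \qquad \psi(\cdot,0) = 0,
\]
where $\widetilde\theta(x,s) = \theta(x,\tau+s/d)$. Thus $\psi$ solves a problem of exactly the type in Lemma \ref{maximal_reg} with diffusion coefficient $1$, after the harmless time reversal $s\mapsto d(T-\tau)-s$ that interchanges the initial and terminal conditions and leaves $\|\Delta\cdot\|_p$ unchanged.

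Next I would apply the maximal regularity estimate for the Laplacian. By Lemma \ref{maximal_reg} with $d=1$, whose constant $C(p)$ is independent of the length of the time interval, one has
\[
\|\Delta\psi\|_{p,\Omega\times(0,d(T-\tau))} \leq C(p)\,\bigl\|\tfrac1d\widetilde\theta\bigr\|_{p,\Omega\times(0,d(T-\tau))}.
\]
Changing variables back, the Jacobian of $t\mapsto s$ contributes a factor $d$ inside each $L^p$ integral, so that $\|\Delta\psi\|_{p,\Omega\times(0,d(T-\tau))}^p = d\,\|\Delta\phi\|_{p,\Omega\times(\tau,T)}^p$ and likewise for $\widetilde\theta$ and $\theta$; the common factor $d^{1/p}$ cancels on both sides and yields
\[
\|\Delta\phi\|_{p,\Omega\times(\tau,T)} \leq \frac{C(p)}{d}\,\|\theta\|_{p,\Omega\times(\tau,T)},
\]
which is the first inequality in \eqref{scale_1}; the second follows at once since $d\geq 1$. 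For \eqref{scale_2} I would then use the equation itself, $\pa_t\phi = d\Delta\phi + \theta$, so that $\|\pa_t\phi\|_{p,\Omega\times(\tau,T)} \leq d\,\|\Delta\phi\|_{p,\Omega\times(\tau,T)} + \|\theta\|_{p,\Omega\times(\tau,T)} \leq (C(p)+1)\|\theta\|_{p,\Omega\times(\tau,T)}$, where the factor $d$ exactly cancels the gained $1/d$.

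The only genuinely delicate point is that the maximal regularity constant for the Laplacian term must be independent of the length of the \emph{rescaled} time interval $d(T-\tau)$; otherwise the rescaling would silently reintroduce a dependence on $d$ and the gain $1/d$ would be lost. This is precisely the assertion in Lemma \ref{maximal_reg} that $C_{d,p}$ does not depend on $\tau,T$, and using it with normalized diffusion is what makes the explicit scaling $C_{d,p}=C(p)/d$ transparent. Everything else is routine bookkeeping of the change of variables and the matching of initial and terminal data under time reversal.
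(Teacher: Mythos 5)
Your proof is correct and follows essentially the same route as the paper: rescale time to normalize the diffusion coefficient to $1$, apply the $\tau,T$-independent maximal regularity of Lemma \ref{maximal_reg}, observe that the Jacobian factors from the change of variables cancel so that the $1/d$ from the rescaled source term survives, and then read \eqref{scale_2} directly from the equation. Your explicit remark about the time reversal matching the terminal-value formulation of Lemma \ref{maximal_reg}, and about why interval-length independence of the constant is the crux, is a point the paper leaves implicit but is handled correctly here.
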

\begin{proof}
	Let $w(x,t) = \phi(x,t/d)$. Direct computations lead to
	\begin{equation*}
		\begin{cases}
			\pa_t w - \Delta w = \frac 1d \widetilde{\theta}&\text{ in } \Omega\times(d\tau,dT),\\
						\nabla \phi \cdot \nu = 0, &\text{ on } \pa\Omega\times(d\tau,dT),\\
						\phi(x,d\tau) = 0, &\text{ in } \Omega
		\end{cases}
	\end{equation*}
	where $\widetilde{\theta}(x,t) = \theta(x,t/d)$. Thanks to Lemma \ref{maximal_reg} we have
	\begin{equation*}
		\|\Delta w\|_{p,\Omega\times(d\tau, dT)} \leq C_{1,p}\left\|\frac 1d \widetilde{\theta}\right\|_{p,\Omega\times(d\tau, dT)}
	\end{equation*}
	or equivalently
	\begin{equation*}
		\int_{d\tau}^{dT}\int_{\Omega}|\Delta w|^pdxdt \leq \left(\frac{C_{1,p}}{d}\right)^p\int_{d\tau}^{dT}\int_{\Omega}|\widetilde{\theta}|^pdxdt.
	\end{equation*}
	By changing variable $s = t/d$, we obtain
	\begin{equation*}
		d\int_{\tau}^{T}\int_{\Omega}|\Delta \phi(x,s)|^pdxds \leq \frac{C_{1,p}^p}{d^{p-1}}\int_{\tau}^{T}\int_{\Omega}|\theta(x,s)|^pdxds
	\end{equation*}
	which proves \eqref{scale_1}. For \eqref{scale_2} we have
	\begin{equation*}
		\|\pa_t\phi\|_{p,\Omega\times(\tau,T)} \leq d\|\Delta \phi\|_{p,\Omega\times(\tau,T)} + \|\theta\|_{p,\Omega\times(\tau,T)} \leq (C(p)+1)\|\theta\|_{p,\Omega\times(\tau,T)}.
	\end{equation*}
\end{proof}

We are now ready to prove Theorem \ref{large_diff}.
\begin{proof}[Proof of Theorem \ref{large_diff}]
	We first notice that the truncated nonlinearities $\Phi_r(u)f_i(u)$ are Lipschitz continuous and bounded (depending on $r$), therefore the global existence of a classical solution follows immediately once the initial data is bounded. We are going to show that for large enough diffusion coefficients, the solution to \eqref{sys_u} is bounded by some $B_M$ {\it independent of $r$}, i.e.
	\begin{equation*}
		\sup_{t\geq 0}\|u_{i}(t)\|_{\infty,\Omega} \leq B_M \quad \text{ for all } \quad i=1,\ldots, m,
	\end{equation*}
	and therefore conclude Theorem \ref{large_diff} by choosing $r$ large enough.
	
	Recalling $d_{\min} = \min\{d_i: i=1,\ldots, m\}$ and define $a = d_{\min}^{-1}$. Defining the rescaled function $v_i(x,t) = u_i(x,at)$, we obtain the rescaled system for $v = (v_1, \ldots, v_m)$ 
	\begin{equation}\label{sys_v}
	\begin{cases}
		\pa_t v_i - \widetilde{d_i}\Delta v_i = a\widetilde{f_i}(v), &(x,t)\in \Omega\times \R_+,\\
		\nabla v_i\cdot \nu = 0, &(x,t)\in \pa\Omega\times\R_+,\\
		v_i(x,0) = u_{i0}(x), &x\in\Omega,
	\end{cases}
	\end{equation}
	where
	\begin{equation*}
		\widetilde{d_i} = ad_i \geq 1 \qquad \text{ and } \qquad \widetilde{f_i}(v(x,t)) = \Phi_r(v(x,t))f_i(v(x,t)).
	\end{equation*}
	Without loss of generality we assume that $z_0 = 0$, which implies $\widetilde{f_i}(0) = 0$. Since $f_i(u)$ is locally Lipschitz, $\widetilde{f_i}(v)$ is Lipschitz with some constant depending on $r$, i.e. there exists $L_r$ such that
	\begin{equation}\label{f_Lipschitz}
		|\widetilde{f_i}(v)| = |\widetilde{f_i}(v) - \widetilde{f_i}(0)| \leq L_r|v| \quad \text{ for all } v\in \R^m, \quad i=1,\ldots, m.
	\end{equation}
	For simplicity, we denote by 
	$$\|v(t)\|_{\infty,\Omega} = \max_{i=1,\ldots, m}\|v_i(t)\|_{\infty,\Omega} \quad \text{ and } \quad \|v\|_{\infty,\Omega\times\R_+} = \max_{i=1,\ldots, m}\|v_i\|_{\infty,\Omega\times \R_+}.$$
	Let $K\in \mathbb N$ be the smallest number such that
	\begin{equation}\label{large_K}
		2\left(\frac{n+2}{n}\right)^K > \frac{n+2}{2}
	\end{equation}
	and define $L = K+2$. Let $\tau \geq 0$. We multiply \eqref{sys_v} by $v_i$ and integrate on $(\tau, t)$ for $t\in (\tau, \tau+L)$ to get
	\begin{equation}\label{e8}
		\frac 12 \|v_i(t)\|_{2,\Omega}^2 + \widetilde{d_i}\|\nabla v_i\|_{2,\Omega\times(\tau,t)}^2 = \frac 12\|v_i(\tau)\|_{2,\Omega}^2 + a\int_{\tau}^{t}\int_{\Omega}v_i\widetilde{f_i}(v)dxds.
	\end{equation}
	The terms on the right hand side of \eqref{e8} are estimated as
	\begin{equation*}
		\|v_i(\tau)\|_{2,\Omega}^2 \leq \|v_i(\tau)\|_{\infty,\Omega}\|v_i(\tau)\|_{1,\Omega} \leq \|v\|_{\infty,\Omega\times \R_+}L_M
	\end{equation*}
	and, by using \eqref{f_Lipschitz},
	\begin{align*}
		\left|a\int_{\tau}^{t}\int_{\Omega}v_i\widetilde{f_i}(v)dxds\right| &\leq aL_r\int_{\tau}^{\tau+L}\|v_i\|_{1,\Omega}\|v\|_{\infty,\Omega}ds\\
		&\leq C(L)aL_r\|v\|_{\infty,\Omega\times\R_+}L_M.
	\end{align*}
	Inserting these into \eqref{e8} and recalling $\widetilde{d_i} \geq 1$ 
	\begin{equation}\label{e9}
		\sup_{t\in (\tau, \tau+L)}\|v_i(t)\|_{2,\Omega}^2 + \|\nabla v_i\|_{2,\Omega\times(\tau,\tau+L)}^2 \leq C(L)(1+2aL_r)\|v\|_{\infty,\Omega\times \R_+}L_M.
	\end{equation}
	By embedding theorem \cite{LSU68}, we know that 
	\[
	L^\infty(\tau,\tau+L;L^2(\Omega)) \cap L^2(\tau,\tau + L;H^1(\Omega)) \hookrightarrow L^{2\frac{n+2}{n}}(\Omega\times(\tau,\tau+L))
	\]
	with an embedding constant depending only on $L$ and $\Omega$. Let $q = \frac{n+2}{n}$. It follows then from \eqref{e9} that
	\begin{equation}\label{e10}
		\|v_i\|_{2q,\Omega\times(\tau,\tau+L)} \leq C(L)(1+2aL_r)^{1/2}\|v\|_{\infty,\Omega\times \R_+}^{1/2}L_M^{1/2}
	\end{equation}
	for all $\tau \geq 0$. We will show by induction that for each $1\leq k \leq K$, there exist $C(k,L)$ and $0 < \varepsilon_k < 1$ independent of $\tau$ such that
	\begin{equation}\label{k_induction}
		\|v_i\|_{2q^k, \Omega\times(\tau+k,\tau+L)}  \leq C(k,L)[aL_r + (1+2aL_r)^{1/2}]L_M^{1/2}\|v\|_{\infty,\Omega\times\R_+}^{\varepsilon_{k}}.
	\end{equation}
	Thanks to \eqref{e10}, \eqref{k_induction} is true for $k=1$. Assume that \eqref{k_induction} is true for some $k\geq 1$. Recall the smooth cutoff function $\psi:\R \to [0,1]$ defined in the proof of Theorem \ref{uniform_bound} and its shifted function $\psi_\tau(\cdot) = \psi(\cdot - \tau)$. By multiplying the equation of $v_i$ in \eqref{sys_v} by $\psi_{\tau+k}$ we have
	\begin{equation}\label{shift_k}
	\begin{cases}
		\pa_t(\psi_{\tau+k}v_i) - \widetilde{d_i}\Delta(\psi_{\tau+k}v_i) = \psi_{\tau+k}'v_i + a\psi_{\tau+k}\widetilde{f_i}(v), &(x,t)\in \Omega\times(\tau+k,\tau+L)\\
		\nabla (\psi_{\tau+k}v_i)\cdot \nu = 0, &(x,t)\in \pa\Omega\times(\tau+k,\tau+L)\\
		\psi_{\tau+k}v_i(x,\tau+k) = 0, &x\in\Omega.
	\end{cases}
	\end{equation}
	Since $\widetilde{d_i}\geq 1$, we can now apply Lemma \ref{lem:scale} to get 
	\begin{equation}\label{e11}
		\|\psi_{\tau+k}v_i\|_{2q^k,\Omega\times(\tau+k,\tau+L)}^{(2,1)} \leq C(2q^k)(\|\psi_{\tau+k}' v_i\|_{2q^k,\Omega\times(\tau+k,\tau+L)} + a\|\psi_{\tau+k}\widetilde{f_i}(v)\|_{2q^k,\Omega\times(\tau+k,\tau+L)}).
	\end{equation}
	Since $0 \leq |\psi_{\tau+k}'| \leq 2$ it follows from \eqref{k_induction} that
	\begin{equation}\label{e12}
		\|\psi_{\tau+k}'v_i\|_{2q^k, \Omega\times(\tau+k,\tau+L)}  \leq C(k,L)[aL_r + (1+2aL_r)^{1/2}]L_M^{1/2}\|v\|_{\infty,\Omega\times\R_+}^{\varepsilon_{k}}.
	\end{equation}
	On the other hand, using $0\leq \psi_{\tau+k}\leq 1$ and the Lipschitz property of $\widetilde{f_i}$,
	\begin{equation}\label{e13}
	\begin{aligned}
		\|\psi_{\tau+k}\widetilde{f_i}(v)\|_{2q^k,\Omega\times(\tau+k,\tau+L)} &\leq \left(\int_{\tau+k}^{\tau+L}\int_{\Omega}|\widetilde{f_i}(v)|^{2q^k}dxds\right)^{\frac{1}{2q^k}}\\
		&\leq L_r\left(\int_{\tau+k}^{\tau+L}\|v\|_{\infty,\Omega}^{2q^k-1}\|v\|_{1,\Omega}ds\right)^{\frac{1}{2q^k}}\\
		&\leq C(L)L_r\|v\|_{\infty,\Omega\times\R_+}^{\frac{2q^k-1}{2q^k}}L_M^{\frac{1}{2q^k}}.
	\end{aligned}
	\end{equation}
	Inserting \eqref{e12} and \eqref{e13} into \eqref{e11} one gets (w.l.o.g. we assume $L_M \geq 1$),
	\begin{equation*}
		\|\psi_{\tau+k}v_i\|_{2q^k,\Omega\times(\tau+k,\tau+L)}^{(2,1)} \leq C(k+1,L)[aL_r + (1+2aL_r)^{1/2}]L_M^{1/2}\|v\|_{\infty,\Omega\times\R_+}^{\varepsilon_{k+1}}
	\end{equation*}
	where $\varepsilon_{k+1}$ is chosen as either $\varepsilon_k$ or $\frac{1}{2q^k}$ to maximize $\|v\|_{\infty,\Omega\times\R_+}^{\varepsilon_{k+1}}$. Direct computation shows
	\begin{equation*}
		2q^{k+1} < \frac{(n+2)(2q^k)}{n+2-2(2q^k)},
	\end{equation*}
	and it thus follows from embedding results in Lemma \ref{embedding} that
	\begin{equation*}
		\|\psi_{\tau+k}v_i\|_{2q^{k+1},\Omega\times(\tau+k,\tau+L)} \leq C(k+1,L)[aL_r + (1+2aL_r)^{1/2}]L_M^{1/2}\|v\|_{\infty,\Omega\times\R_+}^{\varepsilon_{k+1}}.
	\end{equation*}
	Hence
	\begin{equation*}
	\|v_i\|_{2q^{k+1},\Omega\times(\tau+k+1,\tau+L)} \leq C(k+1,L)[aL_r + (1+2aL_r)^{1/2}]L_M^{1/2}\|v\|_{\infty,\Omega\times\R_+}^{\varepsilon_{k+1}}
	\end{equation*}
	since $\psi_{\tau+k}(t) = 1$ for all $t\geq \tau+k+1$, which proves \eqref{k_induction}.
	
	Now we apply maximal regularity in Lemma \ref{lem:scale} for \eqref{shift_k} with $k=K$ to get
	\begin{align*}
		&\|\psi_{\tau+K}v_i\|_{2q^K,\Omega\times(\tau+K,\tau+L)}^{(2,1)}\\
		&\leq C(2q^K)(\|\psi_{\tau+K}'v_i\|_{2q^K,\Omega\times(\tau+K,\tau+L)} + a\|\psi_{\tau+K}\widetilde{f_i}(v)\|_{2q^K,\Omega\times(\tau+K,\tau+L)})\\
		&\leq C(K)\left(\|v_i\|_{2q^K,\Omega\times(\tau+K,\tau+L)} + aL_r\|v\|_{\infty,\Omega\times\R_+}^{\frac{2q^K-1}{2q^K}}L_M^{\frac{1}{2q^K}}\right)\\
		&\leq C(K)[aL_r + (1+2aL_r)^{1/2}]L_M^{1/2}\|v\|_{\infty,\Omega\times\R_+}^{\varepsilon_{K+1}}
	\end{align*}
	where $\varepsilon_{K+1}$ is chosen to be $\varepsilon_K$ or $\frac{2q^K-1}{2q^K}$ to maximize $\|v\|_{\infty,\Omega\times\R_+}^{\varepsilon_{K+1}}$. Since $2q^K > \frac{n+2}{2}$ from \eqref{large_K} we can use the embedding in Lemma \ref{embedding} to obtain
	\begin{equation*}
		\|v_i\|_{\infty,\Omega\times(\tau+K+1,\tau+L)} \leq C(K,L)[aL_r + (1+2aL_r)^{1/2}]L_M^{1/2}\|v\|_{\infty,\Omega\times\R_+}^{\varepsilon_{K+1}}.
	\end{equation*}
	Since the right hand side is independent of $\tau \geq 0$ and this inequality is true for all $i=1,\ldots, m$, it yields
	\begin{equation}\label{e14}
		\|v\|_{\infty,\Omega\times(K+1,\infty)} \leq C(K)[aL_r + (1+2aL_r)^{1/2}]L_M^{1/2}\|v\|_{\infty,\Omega\times\R_+}^{\varepsilon_{K+1}}.
	\end{equation}

	\medskip
	We now consider two cases:
	
	\medskip
	{\it Case 1.} If $\sup_{t\geq 0}\|v(t)\|_{\infty,\Omega}$ is attained at $t > K+1$ then $\|v\|_{\infty,\Omega\times\R_+} = \|v\|_{\infty,\Omega\times (K+1,\infty)}$. From Young's inequality
	\begin{equation*}
		X\leq AX^\epsilon \text{ for some } \epsilon\in (0,1) \quad \Longrightarrow \quad X \leq 2(1-\epsilon)(2\epsilon)^{\frac{\epsilon}{1-\epsilon}}A^{\frac{1}{1-\epsilon}},
	\end{equation*}
	so we get from \eqref{e14} that
	\begin{equation}\label{uniform_1}
		\|v\|_{\infty,\Omega\times \R_+} \leq C(K,\varepsilon_{K+1})[aL_r + (1+2aL_r)^{1/2}]^{\frac{1}{1-\varepsilon_{K+1}}}L_M^{\frac{1}{2(1-\varepsilon_{K+1})}}.
	\end{equation}

	\medskip
	{\it Case 2.} In the case $\sup_{t\geq 0}\|v(t)\|_{\infty,\Omega}$ is attained at some point between $0$ and $K+1$, we use the Duhamel's formula to write
	\begin{equation*}
		v_i(t) = e^{\widetilde{d_i} t \Delta }u_{i,0} + a\int_0^te^{-\widetilde{d_i}(t-s)\Delta}\widetilde{f_i}(v(s))ds.
	\end{equation*}
	Using $\|e^{\widetilde{d_i}t\Delta}f\|_{\infty,\Omega} \leq \|f\|_{\infty,\Omega}$, we have
	\begin{equation*}
		\|v_i(t)\|_{\infty,\Omega} \leq \|u_{i,0}\|_{\infty,\Omega} + a\int_0^t\|\widetilde{f_i}(v(s))\|_{\infty,\Omega}ds \leq M + aL_r\int_0^t\|v(s)\|_{\infty,\Omega}ds.
	\end{equation*}
	Since $\|v(t)\|_{\infty,\Omega} = \max_{i=1,\ldots,m}\|v_i(t)\|_{\infty,\Omega}$ it follows from Gronwall's lemma that
	\begin{equation}
		\|v(t)\|_{\infty,\Omega} \leq Me^{aL_rt} \leq Me^{aL_r(K+1)}
	\end{equation}
	for all $t\in (0,K+1)$. Therefore, in {\it Case 2},
	\begin{equation}\label{uniform_2}
		\|v\|_{\infty,\Omega\times\R_+} = \|v\|_{\infty,\Omega\times(0,K+1)} \leq Me^{aL_r(K+1)}.
	\end{equation}
	
	From \eqref{uniform_1} and \eqref{uniform_2}, by imposing $a$ small enough, or equivalently $d_{\min}$ large enough, such that $aL_r \leq 1$ one gets
	\begin{equation}\label{uniform_3}
		\|v\|_{\infty,\Omega\times \R_+} \leq B_M
	\end{equation}
	where
	\begin{equation}\label{B_M}
		B_M = \max\left\{C(K,\varepsilon_{K+1})3^{\frac{1}{1-\varepsilon_{K+1}}}L_M^{\frac{1}{2(1-\varepsilon_{K+1})}}; e^{K+1}M\right\}.
	\end{equation}
	Since this $B_M$ is independent of $r$, we can conclude the proof of Theorem \ref{large_diff} by choosing $r$ large enough.
\end{proof}


\medskip
\begin{proof}[Proof of Theorem \ref{small_data}]
	We proceed exactly as the proof of Theorem \ref{large_diff}, until we get the bounds \eqref{uniform_1} and \eqref{uniform_2}. We see from \eqref{B_M} that if $M\to 0$ and $L_M \to 0$ then $B_M \to 0$. Moreover, if $r\to 0$ then $L_r \to 0$, recalling $L_r$ is the Lipschitz constant for $\widetilde{f_i}(v)$ on the ball $\{|x| \leq r\}$. Therefore, by demanding $B_M$ to be small enough, we can get a small $r > B_M$ such that $aL_r \leq 1$ where $a = d_{\min}^{-1}$. The estimate \eqref{uniform_3} then follows, which finishes the proof of Theorem \ref{small_data}.
\end{proof}

\medskip
\begin{proof}[Proof of Corollary \ref{cor:dissipation}]\hfill\
\begin{itemize}
\item[(i)] 	The quasi-positivity of the nonlinearities in \eqref{A2} and the mass dissipation assumption $\sum_{i=1}^mf_i(u) \leq 0$ implies immediately that $f(0) = 0$. It remains to check the $L^\infty(0,\infty;L^1(\Omega))$ bound. Indeed, by summing the equations of $u_i$ in \eqref{sys_u}
	\begin{equation*}
		\sum_{i=1}^m\pa_t u_i - \sum_{i=1}^md_i\Delta u_i = \sum_{i=1}^m\Phi_r(u)f_i(u) \leq 0.
	\end{equation*}
	Integrating on $\Omega\times (0,t)$ and using the homogeneous Neumann boundary condition give
	\begin{equation}\label{ff}
		\sup_{t\geq 0}\sum_{i=1}^m\|u_i(t)\|_{1,\Omega} \leq \sum_{i=1}^M\|u_{i,0}\|_{1,\Omega} \leq m|\Omega|M =: L_M,
	\end{equation}
	where we used $\|u_{i,0}\|_{\infty,\Omega} \leq M$ at the last step.
	
	\item[(ii)] The proof is similar to that of part (i) with the observation in \eqref{ff} that $L_M\to 0$ as $M\to 0$.
	\end{itemize}
\end{proof}
\begin{proof}[Proof of Corollary \ref{system_for_averages}]
	By integrating the equation of $u_i$ and taking into account the homogeneous Neumann boundary condition, we have
	\begin{equation*}
		\partial_t \overline{u}_i = \frac{1}{|\Omega|}\int_{\Omega}f_i(u)dx.
	\end{equation*}
	Taking the difference with the equation of $u_i$ leads to
	\begin{equation}\label{diff}
		\partial_t(u_i - \overline{u}_i) - d_i\Delta u_i = f_i(u) - \frac{1}{|\Omega|}\int_{\Omega}f_i(u)dx.
	\end{equation}
	We multiply \eqref{diff} by $u_i - \overline{u_i}$ in $L^2(\Omega)$ and sum over $i=1,\ldots, m$ to obtain
	\begin{equation}\label{diff1}
		\frac{1}{2}\frac{d}{dt}\sum_{i=1}^m\|u_i - \overline{u}_i\|_{2,\Omega}^2 + \sum_{i=1}^m d_i\|\nabla u_i\|_{2,\Omega}^2 = \sum_{i=1}^m\int_{\Omega}\left(f_i(u) - \frac{1}{|\Omega|}\int_{\Omega}f_i(u)dx\right)(u_i - \overline{u}_i)dx.
	\end{equation}
	Due to the uniform boundedness $\|u_i(t)\|_{\infty,\Omega} \leq M$ and the local Lipschitz continuity of $f_i$, we estimate
	\begin{equation}\label{a1}
	\begin{aligned}
			\left|f_i(u) - \frac{1}{|\Omega|}\int_{\Omega}f_i(u)dx\right| &\leq |f_i(u) - f_i(\overline{u})| + \left|f_i(\overline{u}) - \frac{1}{|\Omega|}\int_{\Omega}f_i(u)dx\right|\\
			&\leq C(M)|u - \overline{u}| + \frac{1}{|\Omega|}\int_{\Omega}|f_i(\overline{u}) - f_i(u)|dx\\
			&\leq C(M)|u - \overline{u}|.
		\end{aligned}
	\end{equation}
	Therefore the right hand side of \eqref{diff1} can be estimated above by
	\begin{equation*}
		\left|\sum_{i=1}^m\int_{\Omega}\left(f_i(u) - \frac{1}{|\Omega|}\int_{\Omega}f_i(u)dx\right)(u_i - \overline{u}_i)dx\right| \leq C(M)\sum_{i=1}^m\|u_i - \overline{u}_i\|_{2,\Omega}^2
	\end{equation*}
	It follows then from \eqref{diff1} and the Poincar\'e inequality $\|\nabla h\|_{2,\Omega}^2 \geq  C_{\Omega}\|h -  \overline{h}\|_{2,\Omega}^2$ that
	\begin{equation*}
		\frac{d}{dt}\sum_{i=1}^m\|u_i - \overline{u}_i\|_{2,\Omega}^2 + 2\sum_{i=1}^md_iC_\Omega\|u_i - \overline{u}_i\|_{2,\Omega}^2 \leq C(M)\sum_{i=1}^m\|u_i - \overline{u}_i\|_{2,\Omega}^2.
	\end{equation*}
	By using $d_i \geq d_{\min}$ and choosing $d_{\min}$ large enough so that $\delta:= 2d_{\min}C_{\Omega}  - C(M) > 0$ we get
	\begin{equation*}
		\frac{d}{dt}\|u_i - \overline{u}_i\|_{2,\Omega}^2 + \delta \sum_{i=1}^m\|u_i - \overline{u}_i\|_{2,\Omega}^2 \leq 0,
	\end{equation*}
	and therefore by Gronwall's lemma
	\begin{equation*}
		\sum_{i=1}^m\|u_i(t) - \overline{u}_i(t)\|_{2,\Omega}^2 \leq e^{-\delta t}\sum_{i=1}^m\|u_{i0} - \overline{u}_{i0}\|_{2,\Omega}^2.
	\end{equation*}
	By the uniform boundedness $\|u_i(t)\|_{\infty,\Omega} \leq M$, we get by interpolation for $2<p<\infty$
	\begin{equation*}
		\|h\|_{p,\Omega} \leq \|h\|_{2,\Omega}^{2/p}\|h\|_{\infty,\Omega}^{(p-2)/p}
	\end{equation*}
	so we get 
	\begin{equation}\label{diff3}
		\sum_{i=1}^m\|u_i(t) - \overline{u}_i(t)\|_{p,\Omega} \leq C(M)e^{-\lambda_p t}
	\end{equation}
	for some $\lambda_p > 0$. To prove the decay in $L^\infty(\Omega)$-norm we will use the following estimates of the heat semigroup
	\begin{equation}\label{heat}
		\|e^{t d_i \Delta}f\|_{\infty,\Omega} \leq C\|f\|_{\infty,\Omega}\quad \text{ and } \quad \|e^{t d_i \Delta}f\|_{\infty,\Omega} \leq C\|f\|_{p,\Omega}t^{-\frac{n}{2p}}.
	\end{equation}
	From \eqref{diff} it holds
	\begin{equation*}
		\partial_t(u_i - \overline{u}_i) - d_i\Delta(u_i - \overline{u}_i) = h:= f_i(u) - \frac{1}{|\Omega|}\int_{\Omega}f_i(u)dx.
	\end{equation*}
	From \eqref{a1} and \eqref{diff3} it holds for all $1 \leq p < \infty$
	\begin{equation}\label{diff4}
		\|h(s)\|_{p,\Omega} \leq C\|u(s) - \overline{u}(s)\|_{p,\Omega} \leq Ce^{-\lambda_p t}.
	\end{equation}
	By Duhamel's formula
	\begin{equation*}
		u_i(t+1) - \overline{u}_i(t+1) = e^{d_i \Delta }(u_i(t) - \overline{u}_i(t)) + \int_0^1e^{(1-s)d_i\Delta}h(t+s)ds.
	\end{equation*}
	Taking $L^\infty(\Omega)$ norm of both sides and using \eqref{heat} and \eqref{diff4} lead to
	\begin{align*}
		\|u_i(t+1) - \overline{u}_i(t+1)\|_{\infty,\Omega} &\leq \|e^{d_i \Delta }(u_i(t) - \overline{u}_i(t))\|_{\infty,\Omega} + \int_0^1\left\|e^{(1-s)d_i\Delta}h(t+s)\right\|_{\infty,\Omega}ds\\
		&\leq C\|u_i(t) - \overline{u}_i(t)\|_{p,\Omega} + C\int_0^1(1-s)^{-\frac{n}{2p}}\|h(t+s)\|_{p,\Omega}ds\\
		&\leq Ce^{-\lambda_p t} + C\int_0^1(1-s)^{-\frac{n}{2p}}e^{-\lambda_p (t+s)}ds\\
		&\leq Ce^{-\lambda_p t}\left[1 + \int_0^1(1-s)^{-\frac{n}{2p}}e^{-\lambda_p s}ds\right]\\
		&\leq Ce^{-\lambda_p t}
	\end{align*}
	by choosing $p > \frac{n}{2}$, which proves \eqref{decay}. To get \eqref{ubar} we integrate the equation of $u_i$ to have
	\begin{equation*}
		\partial_t \overline{u}_i = \frac{1}{|\Omega|}\int_{\Omega}f_i(u)dx = f_i(\overline{u}) + g_i(t) \quad \text{ with } \quad g_i(t) = \frac{1}{|\Omega|}\int_{\Omega}f_i(u)dx - f_i(\overline{u}).
	\end{equation*}
	By \eqref{a1} and \eqref{diff3} we have
	\begin{equation*}
		|g_i(t)| \leq \frac{1}{|\Omega|}\int_{\Omega}|f_i(u) - f_i(\overline{u})|dx \leq C\int_{\Omega}|u - \overline{u}|dx \leq Ce^{-\lambda_1 t}.
	\end{equation*}
\end{proof}

\section{Applications}\label{appl}
\subsection{Global Attractor Conjecture with large diffusions}\label{GAC}
By applying Corollary \ref{system_for_averages}, we will show that if the Global Attractor Conjecture holds in the ODE setting of a complex balanced reaction system, then it is also true for the corresponding PDE setting provided the diffusions are large enough.

To describe the Global Attractor Conjecture, we consider $m$ chemical species $A_1, \ldots, A_m$ reacting via $R$ reactions of the form
\begin{equation}\label{reactions}
	\alpha_r^1A_1 + \ldots + \alpha_r^m A_m \xrightarrow{k_r} \beta_r^1A_1 + \ldots + \beta_r^m A_m, \quad \text{ for all } \quad r=1, \ldots, R,
\end{equation}
where $\alpha_r^i, \beta_r^i \in \{0\}\cup [1,\infty)$ are stoichiometric coefficients, $k_r > 0$ is the reaction rate constant. Let $u_i(x,t)$ be the concentration density of $S_i$ at position $x\in\Omega \subset \mathbb R^n$, a bounded domain with smooth boundary $\partial\Omega$, and time $t>0$. Assuming that the species $A_i$ diffuses at the rate $d_i>0$, we obtain the following reaction-diffusion system (thanks to the mass action law)
\begin{equation}\label{PDE}
	\partial_t u_i - d_i\Delta u_i = f_i(u):= \sum_{r=1}^{R}k_r(\beta_r^i - \alpha_r^i)u^{\alpha_r}
\end{equation}
subject to homogeneous Neumann boundary condition $\nabla u_i \cdot \nu = 0$ on $\partial\Omega$ and nonnegative initial data $u_i(x,0) = u_{i0}(x)$. Here we use the notation $\alpha_r = (\alpha_r^1, \ldots, \alpha_r^m)$, $\beta_r = (\beta_r^1, \ldots, \beta_r^m)$ and
\begin{equation*}
	u^{\alpha_r}:= \prod_{i=1}^mu_i^{\alpha_r^i}.
\end{equation*}
In parallel, one can also consider the spatially homogeneous systems for \eqref{reactions}. More precisely, denote by $v_i(t)$ the concentration of $S_i$ at time $t>0$. Then we get the differential system
\begin{equation}\label{differential}
	\partial_tv_i = f_i(v)
\end{equation}
subject to initial data $v_0 = (v_{i0})_{i=1,\ldots, m}\in (0,\infty)^m$.

One of the main interests in chemical reaction network theory is to determine the dynamical system behavior of \eqref{differential}, or more generally \eqref{PDE}. One large class under consideration is call a {\it complex balanced systems}. A spatially homogeneous state $u_\infty \in [0,\infty)^m$ is called a homogeneous equilibrium (or simply an equilibrium) if $f_i(u_\infty) = 0$ for all $i=1,\ldots, m$. An equilibrium $u_\infty$ is called \textit{complex balanced equilibrium} (or CBE for short) if for any $y \in \{\alpha_r, \beta_r\}_{r=1,\ldots, R}$ it holds
\begin{equation}\label{complex_balance}
	\sum_{r\in \{1, \ldots, R\}|\; \alpha_r = y}k_r^f u_\infty^{\alpha_r} = \sum_{r\in \{1, \ldots, R\}|\; \beta_r = y}k_r^b u_{\infty}^{\alpha_r}.
\end{equation}
The left hand side of \eqref{complex_balance} represents the total out-flow at the complex $y$ while the right hand side represents the total in-flow. Note that if $u_\infty$ is a CBE of \eqref{PDE} then it is also a CBE of \eqref{differential} and vice versa. It is well known that if \eqref{reactions} has a CBE, then all other equilibria are also complex balanced. Therefore, we call the reactions system \eqref{reactions} (or \eqref{PDE} or \eqref{differential}) complex balanced if it has at least one CBE.

One crucial consequence of the complex balance condition is the existence of a dissipating entropy function (or Lyapunov function) which reads as
\begin{equation*}
	E[u|u_\infty] = \sum_{i=1}^mu_i\log{\frac{u_i}{u_{i\infty}}} - u_i + u_{i\infty}
\end{equation*}
for the ODE setting and
\begin{equation*}
	\mathcal{E}[u|u_\infty] = \sum_{i=1}^m\int_{\Omega}u_i\log{\frac{u_i}{u_{i\infty}}} - u_i + u_{i\infty} dx
\end{equation*}
for the PDE setting (see \cite{DFT17} for more details). Especially, the entropy function $\mathcal{E}[u|u_\infty]$ gives a bound of solutions to \eqref{PDE} in $L^\infty((0,\infty);L^1(\Omega))$ (see \cite[Lemma 2.5]{FT18a}), which is needed for our results in Section \ref{thm2}.


To state the Global Attractor Conjecture, we denote by $S = \mathrm{span}\{\beta_r - \alpha_r\}_{r=1,\ldots, m}$. It is easy to see that the solution $v(t)$ to \eqref{differential} satisfies
\begin{equation*}
	v(t) \in (v_0 + S)\cap \mathbb R_+^m \qquad \text{ for all } \quad t>0,
\end{equation*}
and $(v_0 + S)\cap \mathbb R_+^m$ is called the \textit{positive compatibility class}. It is known that in the {\it interior} of each compatibility class, there exists a unique CBE $u_\infty$. We emphasize that there might exist (possibly infinitely many) CBE on the boundary of the compatibility class, which will be called {\it boundary equilibria}.

\medskip
\noindent{\textsc{Global Attractor Conjecture.}} \textit{A CBE contained in the interior of a positive compatibility class is a} global attractor \textit{of the interior of that positive class.}

\begin{lemma}[Exponential Convergence to equilibrium]\label{exp_ODE}
	Assume that the reaction network is complex balanced, and the GAC is true for the ODE setting \eqref{differential}. Then for each positive initial data $v_0$, the solution converges exponentially to the positive equilibrium $u_\infty$ in the interior of the corresponding compatibility class, i.e.
	\begin{equation*}
		\sum_{i=1}^{m}|v_i(t) - u_{i\infty}| \leq Ce^{-\lambda t} \quad \text{ for all } \quad t>0,
	\end{equation*}
	where $C, \lambda > 0$.
\end{lemma}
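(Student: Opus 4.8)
The plan is to combine the purely qualitative convergence furnished by the GAC with a \emph{local} exponential stability analysis of the positive equilibrium $u_\infty$; the GAC supplies convergence of the whole trajectory, and a local rate estimate upgrades this to a global exponential rate once the trajectory enters a small neighborhood of $u_\infty$.

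First I would invoke the hypothesis directly: since the network is complex balanced and the GAC is assumed to hold for the ODE system \eqref{differential}, the positive equilibrium $u_\infty$ is the global attractor of the interior of the compatibility class $(v_0+S)\cap\R_+^m$. Hence for any positive $v_0$ the solution satisfies $v(t)\to u_\infty$ as $t\to\infty$. By quasi-positivity the trajectory stays in the interior of the nonnegative orthant for all time, and together with the entropy bound it remains in a fixed compact subset $\mathcal K$ of $\R_+^m$; this convergence carries no decay rate, which is what the remaining steps must produce.

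Next I would establish local exponential stability of $u_\infty$ within the affine slice $v_0+S$, using the relative entropy $E[v|u_\infty]$ as a strict Lyapunov function. Along solutions of \eqref{differential} the Horn--Jackson identity gives $\frac{d}{dt}E[v(t)|u_\infty]=-D[v(t)]$ with dissipation $D\ge 0$. Expanding $E$ and $D$ to second order about $u_\infty$ shows that $E[v|u_\infty]$ is comparable to the positive-definite quadratic form $\tfrac12\sum_i (v_i-u_{i\infty})^2/u_{i\infty}$ on $S$, and that there exist a neighborhood $\mathcal U\subset v_0+S$ of $u_\infty$ and $\kappa>0$ with $D[v]\ge \kappa\, E[v|u_\infty]$ for $v\in\mathcal U$. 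Equivalently, the linearization $A=Df(u_\infty)$ restricted to $S$ is Hurwitz, since $\mathrm{diag}(1/u_{i\infty})\,A$ is symmetric and negative definite on $S$; this is precisely the infinitesimal form of Horn--Jackson stability of complex balanced equilibria. Combining the entropy–dissipation inequality with convergence, one picks $T_0$ so that $v(t)\in\mathcal U$ for $t\ge T_0$, whence Gronwall applied to $\frac{d}{dt}E\le-\kappa E$ yields $E[v(t)|u_\infty]\le e^{-\kappa(t-T_0)}E[v(T_0)|u_\infty]$. A Csisz\'ar--Kullback--Pinsker-type inequality $(\sum_i|v_i-u_{i\infty}|)^2\le C\,E[v|u_\infty]$, valid on $\mathcal K$, then converts this into $\sum_i|v_i(t)-u_{i\infty}|\le Ce^{-\lambda t}$ for $t\ge T_0$ with $\lambda=\kappa/2$. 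On the compact interval $[0,T_0]$ the trajectory lies in $\mathcal K$, so the left-hand side is bounded there and can be absorbed by enlarging $C$, giving the claimed estimate for all $t>0$.

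The main obstacle is the local entropy–entropy-dissipation inequality $D\ge\kappa E$ near $u_\infty$ (equivalently, the Hurwitz property of $A|_S$). Its justification rests on the algebraic structure of complex balancing: one expands $D$ to second order and must verify that the resulting quadratic form is \emph{positive definite on $S$}, using that the reaction vectors $\beta_r-\alpha_r$ span $S$ together with the weighted graph-Laplacian structure encoded in \eqref{complex_balance}. This is exactly where the complex balance hypothesis is indispensable and is the step demanding the most care; the coupling of global convergence with local exponential decay, and the passage from entropy decay to $\ell^1$ decay, are then routine.
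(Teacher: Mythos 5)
Your proposal is correct and follows essentially the same strategy as the paper, which likewise uses the GAC to get qualitative convergence (and hence a uniform positive lower bound on the trajectory) and then upgrades to an exponential rate either by linearization near $u_\infty$ or by the local entropy--entropy-dissipation inequality of \cite[Proposition 2.3]{DFT17}; your write-up is a fleshed-out version of the second route combined with a Csisz\'ar--Kullback argument. One small inaccuracy: $\mathrm{diag}(1/u_{i\infty})\,Df(u_\infty)$ is symmetric only under \emph{detailed} balance, not for general complex balanced networks (e.g.\ the cycle $A\to B\to C\to A$); what is true, and what your argument actually needs, is that its symmetric part is negative definite on $S$, which still yields the Hurwitz property and the local inequality $D\ge\kappa E$.
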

\begin{proof}
	The GAC already implies that $\lim_{t\to \infty} v_i(t) = u_{i\infty}$ for all $I=1,\ldots, m$. 
	It also follows that
	\begin{equation}\label{positive_concentration}
		\inf_{t>0}\inf_{i=1,\ldots, m}v_i(t) > 0.
	\end{equation}
	There are two ways to show the exponential convergence. Firstly, one can wait long enough for the trajectory to be in a neighborhood of the equilibrium, and then uses the exponential convergence to equilibrium for the linearized system (see \cite{Tang18}). Second approach is to use the entropy method using \eqref{positive_concentration} as it was proved in \cite[Proposition 2.3]{DFT17}.
\end{proof}
\begin{theorem}
Assume the reaction system \eqref{reactions} is complex balanced. Then if the GAC is true for the differential system \eqref{differential}, then it is also true for the partial differential system \eqref{PDE} and moreover, the convergence to equilibrium is exponential, i.e.
\begin{equation*}
	\sum_{i=1}^m\|u_i(t) - u_{i\infty}\|_{\infty,\Omega} \leq Ce^{-\lambda t}.
\end{equation*}
\end{theorem}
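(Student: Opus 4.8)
The plan is to transfer the Global Attractor Conjecture from the averaged (ODE) dynamics to the full PDE by means of Corollary \ref{system_for_averages}, and then to control the discrepancy between the solution and its spatial average. First I would note that complex balance supplies the entropy functional $\mathcal{E}[u|u_\infty]$, whose dissipation gives the uniform-in-time bound $\sup_{t\geq 0}\|u(t)\|_{1,\Omega}\leq L_M$ needed in Section \ref{thm2} (see \cite[Lemma 2.5]{FT18a}), while the CBE provides a zero $f(u_\infty)=0$ of the nonlinearity. Hence, for the diffusion coefficients large enough (as required by Corollary \ref{system_for_averages}), Theorem \ref{large_diff} yields global existence together with a uniform-in-time $L^\infty$ bound on $u$, and Corollary \ref{system_for_averages} provides both the exponential spatial homogenization
\begin{equation*}
	\sum_{i=1}^m\|u_i(t)-\overline{u}_i(t)\|_{\infty,\Omega}\leq Ce^{-\lambda t}
\end{equation*}
and the perturbed differential system $\partial_t\overline{u}=f(\overline{u})+g(t)$ with $|g(t)|\leq Ce^{-\gamma t}$. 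Since $u_{i\infty}$ and $\overline{u}_i(t)$ are constant in $x$, so that $\|\overline{u}_i(t)-u_{i\infty}\|_{\infty,\Omega}=|\overline{u}_i(t)-u_{i\infty}|$, the triangle inequality reduces the whole theorem to showing $|\overline{u}(t)-u_\infty|\leq Ce^{-\lambda' t}$.

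Before estimating, I would identify the target equilibrium and confine the trajectory. For any conservation vector $c\perp S$ the mass-action structure gives $c\cdot f\equiv 0$, so $c\cdot g(t)=\frac{1}{|\Omega|}\int_\Omega c\cdot f(u)\,dx-c\cdot f(\overline{u})=0$; thus $g(t)\in S$ for all $t$, the conserved quantities of $\overline{u}$ are frozen, and $\overline{u}(t)$ stays in the positive compatibility class $\overline{u}(0)+S$, whose unique interior CBE is precisely $u_\infty$. Moreover $\overline{u}_i(t)\geq 0$, being the spatial average of the nonnegative $u_i$.

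The core estimate is local. Writing $A=Df(u_\infty)$, the complex-balanced structure (the entropy being a strict Lyapunov function with positive definite Hessian at $u_\infty$) makes $A$ restricted to $S$ Hurwitz, so $\|e^{tA}P_S\|\leq Ce^{-\mu t}$, where $P_S$ denotes the projection onto $S$. Provided $\overline{u}(t)$ eventually lies in a small ball $B(u_\infty,\eta)$, Duhamel's formula along $S$ together with the quadratic remainder bound $|f(\overline{u})-A(\overline{u}-u_\infty)|\leq C\eta|\overline{u}-u_\infty|$ and $|g(t)|\leq Ce^{-\gamma t}$ leads, for $\eta$ small enough, to a singular Gronwall inequality whose conclusion is $|\overline{u}(t)-u_\infty|\leq Ce^{-\lambda' t}$ for some $\lambda'>0$. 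Combined with the homogenization estimate of the first paragraph this gives the claimed $\sum_i\|u_i(t)-u_{i\infty}\|_{\infty,\Omega}\leq Ce^{-\lambda' t}$.

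The main obstacle is the hypothesis of this last step: showing that $\overline{u}(t)$ does enter, and then remains in, a fixed neighborhood of $u_\infty$ inside the interior of the compatibility class. Since $g(t)\to 0$, the averaged system is asymptotically autonomous with limiting flow $\dot v=f(v)$, for which the ODE GAC and Lemma \ref{exp_ODE} guarantee that orbits starting in a compact subset of the interior reach $B(u_\infty,\eta/2)$ within a uniform time $T^\ast$; on $[t_0,t_0+T^\ast]$ a finite-time Gronwall comparison keeps $\overline{u}$ within $e^{LT^\ast}T^\ast\sup_{s\geq t_0}|g(s)|<\eta/2$ of the unperturbed orbit once $t_0$ is large. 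The delicate point is \emph{persistence}: one must rule out that $\overline{u}(t)$ drifts toward $\partial\R_+^m$ (where the comparison with interior orbits and the logarithmic entropy gradient both degenerate), so that some such $t_0$ lands $\overline{u}(t_0)$ in a fixed interior compact set. I would close this by combining the persistence $\inf_t\inf_i v_i(t)>0$ of the limiting flow furnished by Lemma \ref{exp_ODE}, the time-integrability of $g$, and the invariance of $\omega$-limit sets under the limiting flow, which by GAC forbids a boundary $\omega$-limit once persistence is established.
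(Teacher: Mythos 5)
Your proposal is correct in outline and, in its second half, takes a genuinely different route from the paper. The first half coincides with the paper's: both use the complex-balance entropy to secure the uniform $L^1$ bound, invoke Theorem \ref{large_diff} and Corollary \ref{system_for_averages} to get the uniform $L^\infty$ bound, the exponential homogenization $\sum_i\|u_i(t)-\overline{u}_i(t)\|_{\infty,\Omega}\leq Ce^{-\lambda t}$ and the perturbed system $\partial_t\overline{u}=f(\overline{u})+g(t)$, and both thereby reduce the theorem to showing $|\overline{u}(t)-u_\infty|\to 0$ exponentially. From there the paper is terse: it cites Markus \cite{Mar56} on asymptotically autonomous systems for the qualitative convergence $\overline{u}(t)\to u_\infty$, and obtains the exponential rate by invoking \cite[Remark 3.9]{DFT17} (entropy method) for exponential $L^1$ convergence, upgrading to $L^\infty$ by heat-semigroup smoothing. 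You instead give a self-contained linearization argument: you first observe $g(t)\in S$, so the compatibility class of $\overline{u}$ is frozen and $u_\infty$ is correctly identified as the target (a point the paper leaves implicit but which is genuinely needed), and then run Duhamel plus Gronwall using that $Df(u_\infty)|_S$ is Hurwitz once $\overline{u}$ enters a small interior ball. This buys independence from the entropy machinery of \cite{DFT17}, at the price of two points you should make precise: (i) the Hurwitz property of $Df(u_\infty)|_S$ does \emph{not} follow merely from the entropy being a strict Lyapunov function with positive definite Hessian (that alone gives asymptotic, not exponential, stability of the linearization); it is a separate structural fact about complex balanced networks, essentially the spectral-gap statement of \cite[Lemma 3.3]{Tang18} that the paper uses elsewhere; and (ii) the persistence step --- ruling out that $\overline{u}(t)$ accumulates on $\partial\R_+^m$, where the GAC gives no information --- which you rightly flag as the delicate point but only sketch. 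To be fair, the paper's own appeal to \cite{Mar56} glosses over exactly the same boundary issue, so your explicit treatment is, if anything, more candid; but as written it remains a program rather than a complete argument for that one step.
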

\begin{proof}
From Corollary \ref{system_for_averages} we know that the averages of concentrations solve the system
\begin{equation}\label{sys_ubar}
	\partial_t \overline{u} = f(\overline{u}) + g(t) \quad \text{ with } \quad |g(t)| \leq Ce^{-\gamma t}.
\end{equation}
Using a classical result, e.g. \cite{Mar56}, this shows that the large time behaviors of \eqref{sys_ubar} and \eqref{differential} are the same under the condition $\overline{u}(0) = v(0)$, or in other words,
\begin{equation*}
	\lim_{t\to\infty}\sum_{i=1}^m|\overline{u}_i(t) - u_{i\infty}| = 0.
\end{equation*}
From Corollary \ref{system_for_averages} we also get
\begin{equation*}
	\lim_{t\to\infty}\sum_{i=1}^m\|u_i(t) - u_{i\infty}\|_{\infty,\Omega} = 0
\end{equation*}
which already means that the GAC is true for the PDE setting \eqref{PDE}. To get the exponential convergence, we first apply \cite[Remark 3.9]{DFT17} to argue that the solution to \eqref{PDE} converges exponentially to $u_\infty$ in $L^1(\Omega)$-norm. The convergence in $L^\infty(\Omega)$ can be obtained similarly as in Corollary \ref{system_for_averages} so we omit it here.
\end{proof}

\subsection{Equilibration for  single reversible reactions with boundary equilibria}
We show in this section the application of the global existence and uniform boundedness to the asymptotic behavior of chemical reactions with boundary equilibria. More precisely, we consider the reversible reaction
\begin{equation*}
\alpha_1 A_1 + \ldots + \alpha_m A_m \underset{k_f}{\overset{k_b}{\leftrightharpoons}} \beta_1 A_1 + \ldots + \beta_m A_m
\end{equation*}
where $A_i$, $i=1,\ldots, m$, are chemical substances, and $\alpha_i, \beta_i \in \{0\}\cup [1,\infty)$ are stoichiometric coefficients. Applying mass action kinetics we have the reaction-diffusion system
\begin{equation}\label{Reactions}
\begin{cases}
\partial_t u_i - d_i\Delta u_i = R_i(u), &x\in\Omega, t>0,\\
\nabla u_i \cdot \nu = 0, &x\in\partial\Omega, t>0,\\
u_i(x,0) = u_{i,0}(x), &x\in\Omega,
\end{cases}
\end{equation}
where $d_i > 0$ are diffusion coefficients, and the reaction terms are
\begin{equation}\label{nonlinearities}
R_i(u) = (\beta_i - \alpha_i)\left(k_f\prod_{i=1}^{m}u_i^{\alpha_i} - k_b\prod_{i=1}^{m}u_i^{\beta_i}\right).
\end{equation}
The initial data are assumed to be nonnegative $u_{i0}\geq 0$ with positive mass, i.e. $\int_{\Omega}u_{i0}(x)dx > 0$. 

A the natural assumption is that $\alpha_i + \beta_i > 0$ for all $i=1,\ldots, m$. From that we define the index sets $I = \{i\in \{1,\ldots, n\}:\alpha_i - \beta_i>0 \}$ and $J = \{1,\ldots, n\}\backslash I$, to which we assume that $I, J \ne \emptyset$. Thanks to the homogeneous Neumann boundary condition and the form of nonlinearities \eqref{nonlinearities} we see that the system \eqref{Reactions} possesses the following conservation laws
\begin{equation*}
\frac{\overline{u_i}(t)}{\alpha_i - \beta_i} + \frac{\overline{u_j}(t)}{\beta_j - \alpha_i} = M_{ij}:= \frac{\overline{u_{i,0}}}{\alpha_i - \beta_i} + \frac{\overline{u_{j,0}}}{\beta_j - \alpha_i}, \quad \forall i \in I \text{ and } j\in J
\end{equation*}
where $M_{ij}$ are called the initial masses, and the average $\overline{u_i}$ is defined as $\overline{u_i} = \frac{1}{|\Omega|}\int_{\Omega}u_i(x)dx$. We remark that from all the conservation laws there are precisely $k$ laws which are linearly independent, where $k = \ker\{(\beta_1-\alpha_1, \ldots, \beta_m - \alpha_m)\}$. It is easy to show that, see e.g. \cite[Lemma 3.2]{FT17}, for fixed positive initial masses $M_{ij}>0$, there exists a unique positive equilibrium $u_\infty = (u_{1\infty}, \ldots, u_{m\infty}) \in (0,\infty)^m$ satisfying
\begin{equation}\label{equi}
k_f\prod_{i=1}^mu_{i\infty}^{\alpha_i} = k_b\prod_{i=1}^mu_{i\infty}^{\beta_i}, \quad \text{ and } \quad \frac{u_{i\infty}}{\alpha_i - \beta_i} + \frac{u_{j\infty}}{\beta_j - \alpha_j} = M_{ij}, \quad \forall i\in I \text{ and } j \in J.
\end{equation}
It is remarked, however, that there might exist many {\it boundary equilibria}, i.e. $u^*\in \partial\mathbb R^m_+$ satisfying the conditions in \eqref{equi}.

\medskip
It's obvious that the nonlinearities satisfy the assumptions \eqref{A1}--\eqref{A3}. The global existence of classical solutions to \eqref{S} is in general open, while the global existence of a renormalized solution was shown in \cite{Fis15}.

\medskip
Concerning the large time behavior of solutions to \eqref{S}, when $\alpha_i \beta_i = 0$ for all $i=1,\ldots, m$, it was shown in \cite{FT17} and \cite{PSZ17} that all renormalized solutions to \eqref{S} converge exponentially to the unique positive equilibrium defined in \eqref{equi}. This is also a consequence of the more general results for complex balanced reaction-diffusion systems in \cite{FT18a}. This is due to the fact that when $\alpha_i \beta_i = 0$ for all $i=1,\ldots, m$, \eqref{equi} possesses a unique positive equilibrium and there is no boundary equilibrium.

When it happens that $\alpha_i \beta_i \ne 0$ for some $i \in \{1, \ldots, m\}$, the system \eqref{S} might have, in additional to the positive equilibrium, a boundary equilibrium, which makes the convergence to the positive one more delicate. It was in fact shown in \cite{PSU18} that the renormalized solutions to \eqref{S} in this case either converge to the positive or to a boundary equilibrium. It was moreover shown that, if the solutions are uniformly bounded in time in $L^\infty(\Omega)$-norm, i.e. $\limsup_{t\to\infty}\|u_i(t)\|_{\infty,\Omega} < +\infty$ for all $i=1,\ldots, m$, then they will converge to the positive equilibrium. Thanks to the Theorem \ref{uniform_bound}, this can be obtained in the case that the diffusion coefficients are close enough to each other. For convenient, we define 
\begin{equation}\label{r_define}
\mu = \max\{\alpha_1 + \ldots + \alpha_m; \beta_1 + \ldots + \beta_m\}.
\end{equation}
It follows immediately that the nonlinearities $R_i(u)$ defined in \eqref{nonlinearities} satisfy the growth condition \eqref{A4}. Therefore, we have
\begin{theorem}\label{boundary_equi}
	Assume that the conditions \eqref{cond} and \eqref{cond1} with $\mu$ is defined in \eqref{r_define}. Assume moreover that $\overline{u_{i0}} > 0$ for all $i=1,\ldots, m$. Then \eqref{S} has a unique global classical solution which is uniformly bounded in time, i.e.
	\begin{equation}\label{uni-bound}
	\sup_{i=1,\ldots, m}\sup_{t>0}\|u_i(t)\|_{\infty,\Omega} < +\infty.
	\end{equation}
	Moreover, this solution converges exponentially to the positive equilibrium defined by \eqref{equi} in $L^\infty$-norm, i.e.
	\begin{equation*}
	\sum_{i=1}^m\|u_i(t) - u_{i\infty}\|_{\infty,\Omega} \leq Ce^{-\lambda t}, \quad \text{ for all } \quad t>0,
	\end{equation*}
	for some $C, \lambda > 0$.
\end{theorem}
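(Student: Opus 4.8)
The plan is to split the statement into two independent tasks: first the global existence and the uniform-in-time bound \eqref{uni-bound}, which is essentially a direct application of Theorem \ref{uniform_bound}; and second the exponential convergence to the positive equilibrium, which rests on the dichotomy of \cite{PSU18} upgraded by the entropy method. For the first task I would verify that the reaction terms $R_i$ in \eqref{nonlinearities} meet the hypotheses of Theorem \ref{uniform_bound}. They are polynomials, hence locally Lipschitz, giving \eqref{A1}; checking the cases $u_i=0$ (using $\alpha_i+\beta_i>0$) shows each $R_i\ge 0$ there, giving quasi-positivity \eqref{A2}; and since $|R_i|$ is controlled by the monomials $\prod_iu_i^{\alpha_i}$ and $\prod_iu_i^{\beta_i}$ of degrees $\sum_i\alpha_i$ and $\sum_i\beta_i$, the growth bound \eqref{A4} holds with $\mu$ as in \eqref{r_define}. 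The mass-dissipation condition needs a short argument, since $R_i=(\beta_i-\alpha_i)W(u)$ with $W(u)=k_f\prod_iu_i^{\alpha_i}-k_b\prod_iu_i^{\beta_i}$, so $\sum_iR_i=W(u)\sum_i(\beta_i-\alpha_i)$ is generally not nonpositive. However, because $I,J\neq\emptyset$ the vector $(\beta_i-\alpha_i)_i$ has both strictly positive and strictly negative entries, so there is a strictly positive weight $(\gamma_i)$ with $\sum_i\gamma_i(\beta_i-\alpha_i)=0$, whence $\sum_i\gamma_iR_i\equiv 0$. This is the weighted (in fact conservative) form of \eqref{A3} covered by the Remark following Theorem \ref{uniform_bound}. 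Since \eqref{cond} and \eqref{cond1} are assumed with $\mu$ from \eqref{r_define}, Theorem \ref{uniform_bound} applies verbatim and yields the unique global classical solution together with \eqref{uni-bound}.

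For the convergence I would first observe that the classical solution produced above is in particular a renormalized solution, so by uniqueness it coincides with the renormalized solution analyzed in \cite{PSU18}. The hypothesis $\overline{u_{i0}}>0$ forces all conserved masses $M_{ij}$ to be positive, so \eqref{equi} determines a \emph{unique} positive equilibrium $u_\infty\in(0,\infty)^m$. The result of \cite{PSU18} asserts that such a solution converges either to $u_\infty$ or to a boundary equilibrium; since \eqref{uni-bound} supplies precisely the uniform $L^\infty$-in-time bound required there, the boundary alternative is ruled out and $u(t)\to u_\infty$ qualitatively.

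It then remains to promote this qualitative convergence to an exponential rate in $L^\infty(\Omega)$. Here I would run the entropy method for the relative entropy $\mathcal{E}[u|u_\infty]$ of the reversible reaction, in the spirit of \cite{FT17,DFT17}. The qualitative convergence $u(t)\to u_\infty$ together with the strict positivity of $u_\infty$ shows that beyond some time $t_0$ the solution is bounded below by a positive constant; combined with the uniform upper bound \eqref{uni-bound} this confines the trajectory to a compact subset of $(0,\infty)^m$ on which the entropy--entropy-dissipation functional inequality $D[u]\ge\lambda\,\mathcal{E}[u|u_\infty]$ holds. A Gronwall argument then gives exponential decay of $\mathcal{E}[u(t)|u_\infty]$, and the Csisz\'ar--Kullback--Pinsker inequality converts this into exponential convergence in $L^1(\Omega)$. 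Finally, using \eqref{uni-bound} and the smoothing estimates of the heat semigroup in a Duhamel representation of $u_i-u_{i\infty}$, exactly as in the closing bootstrap of the proof of Corollary \ref{system_for_averages}, I would lift the $L^1$ rate to the claimed $L^\infty$ rate.

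The main obstacle is precisely the passage from qualitative to exponential convergence: because boundary equilibria exist, the entropy--entropy-dissipation inequality cannot hold with a positive constant uniformly over $\mathbb R_+^m$, so one genuinely needs an a priori uniform-in-time positive lower bound on the components. This is why the \emph{convergence} statement of \cite{PSU18}, and not merely the uniform boundedness, is the decisive input—it is what places the trajectory eventually away from the boundary. The remaining ingredients, namely the verification of the structural hypotheses and the heat-semigroup bootstrap, are routine.
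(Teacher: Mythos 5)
Your proposal is correct and follows essentially the same route as the paper: global existence and \eqref{uni-bound} come from Theorem \ref{uniform_bound}, the exponential $L^1$-convergence comes from \cite{PSU18} once uniform boundedness rules out the boundary equilibria, and the $L^\infty$ rate is obtained by interpolation plus the Duhamel/heat-semigroup smoothing with $p>n/2$, exactly as in the paper. The only differences are that you are more careful than the paper in verifying the structural hypotheses --- in particular your observation that $\sum_i R_i\leq 0$ fails in general and one must invoke the weighted version $\sum_i\gamma_i R_i=0$ from the Remark after Theorem \ref{uniform_bound} is a genuine (and welcome) precision over the paper's ``it's obvious'' --- and that you reconstruct the entropy--entropy-dissipation argument behind the exponential $L^1$ rate, whereas the paper simply cites \cite{PSU18} for it.
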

\begin{proof}
	The global existence and uniform boundedness \eqref{uni-bound} follow directly from Theorem \ref{uniform_bound}, while the exponential convergence in $L^1$-norm follows from \cite{PSU18}. More precisely,
	\begin{equation*}
	\|u_i(t) - u_{i\infty}\|_{1,\Omega} \leq C_1e^{-\lambda_1 t}, \quad\text{ for all } \quad i=1,\ldots, m \text{ and } t > 0,
	\end{equation*}
	where $C_1, \lambda_1 > 0$. It remains to show the exponential convergence in $L^\infty$-norm. Firstly, by interpolation inequality and the uniform boundedness in $L^\infty$-norm, we have for all $1< p<\infty$,
	\begin{equation*}
	\|u_i(t) - u_{i\infty}\|_{p,\Omega} \leq \|u_i(t) - u_{i\infty}\|_{1,\Omega}^{1/p}\|u_{i}(t) - u_{i\infty}\|_{\infty,\Omega}^{(p-1)/p} \leq C_pe^{-\lambda_p t}
	\end{equation*}
	where $\lambda_p = \lambda_1/p$. Let $S(t) = e^{t(d_i\Delta)}$ be the  heat semigroup subject to homogeneous Neumann boundary condition, we have
	\begin{equation*}
	\|S(t)f\|_{\infty,\Omega} \leq Ct^{-n/(2p)}\|f\|_{p,\Omega}.
	\end{equation*}
	From the uniform bounds \eqref{uni-bound} and $R_i(u_\infty) = 0$, it follows that
	\begin{equation*}
	\|R_i(u)\|_{p,\Omega} = \|R_i(u) - R_i(u_\infty)\|_{p,\Omega} \leq C\|u - u_\infty\|_{p,\Omega}
	\end{equation*}
	By using the fact that $u_{i\infty}$ is also a solution to \eqref{Reactions} we have
	\begin{equation*}
	u_i(t+1) - u_{i\infty} = S(1)(u(t) - u_{i\infty}) + \int_0^1S(1-s)R_i(u(t+s))ds.
	\end{equation*}
	Hence
	\begin{align*}
	\|u_i(t+1) - u_{i\infty}\|_{\infty,\Omega} &\leq C\|u_i(t) - u_{i\infty}\|_{p,\Omega} + C\int_{0}^{1}(1-s)^{-n/(2p)}\|u(t+s) - u_\infty\|_{p,\Omega}ds\\
	&\leq Ce^{-\lambda_p t} + C\int_0^1(1-s)^{-n/(2p)}e^{-\lambda_p(t+s)}ds\\
	&\leq Ce^{-\lambda_p t}\left[1 + \int_0^1(1-s)^{-n/(2p)}ds\right]\\
	&\leq Ce^{-\lambda_p t}
	\end{align*}
	by choosing $p>n/2$ at the last step. The proof is therefore complete.
\end{proof}

\subsection{Close-to-equilibrium regularity}
Consider the reaction-diffusion system \eqref{PDE} arising from chemical reaction network theory. A global solution to \eqref{PDE} of any kind is a challenging question as there are not enough good a priori estimates available. The most general result in this direction is the recent work of Fischer \cite{Fis15} in which he proved the global existence of {\it renormalized solutions} assuming the system is dissipating entropy. This assumption is satisfied when, for example, the reaction network is complex balanced. Global classical or strong solutions are obtained under more restrictive conditions, for instance the nonlinearities are at most quadratic \cite{FT18,CGV19,Sou18}. Recently, there's a regime where \eqref{PDE} is considered with initial data which is close to an equilibrium. More precisely, let $r$ be the growth rate of nonlinearities, it was shown in \cite{CC17} that with $\mu = 2$ and $n \leq 4$, \eqref{PDE} has a global classical solution when the initial data is close to an equilibrium in $L^2$-norm. This result was later improved in \cite{Tang18} for $\mu = 1+4/n$ and $n\leq 4$ still with the $L^2$-close-to-equilibrium assumption. In this section, we apply the results in Section \ref{quasi} to show that by assuming the closeness to equilibrium in $L^\infty$-norm, we can remove the restriction on the dimension as well as the growth of nonlinearities.
\begin{theorem}
	Assume that the system \eqref{PDE} has an equilibrium $u_\infty \in [0,\infty)^N$, i.e. $f_i(u_\infty) = 0$ for all $i=1,\ldots, m$. Moreover, assume that there exists $M>0$ with the property
	\begin{equation}\label{L1bound}
		\sup_{t\geq 0}\|u_i(t)\|_{1,\Omega} \leq M \quad \text{ for all } \quad i=1,\ldots, m.
	\end{equation}
	Then there exists $\varepsilon>0$ such that, for all initial data $u_0$ such that $\|u_0 - u_\infty\|_{\infty,\Omega}\leq \varepsilon$, \eqref{PDE} has a unique global classical solution which is uniformly bounded in time, i.e.
	\begin{equation*}
		\sup_{t\geq 0}\|u(t)\|_{\infty,\Omega}\leq C.
	\end{equation*}
	Moreover, if the system is complex balanced, then the solution converges exponentially to equilibrium in $L^\infty$-norm, i.e.
	\begin{equation*}
		\sum_{i=1}^m\|u_i(t) - u_{i\infty}\|_{\infty,\Omega} \leq Ce^{-\lambda t}
	\end{equation*}
	for $C, \lambda >0$.
\end{theorem}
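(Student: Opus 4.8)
The plan is to reduce the final theorem to Corollary \ref{cor:dissipation} (ii) or Theorem \ref{small_data} for the global existence and uniform boundedness, and then to recycle the convergence machinery already developed in Corollary \ref{system_for_averages} and in the proof of Theorem \ref{boundary_equi} for the exponential decay. The hypotheses match those of Theorem \ref{small_data} almost verbatim: we are given an equilibrium $u_\infty$ with $f(u_\infty)=0$, the diffusion coefficients are fixed, and crucially \eqref{L1bound} provides the uniform-in-time $L^1$-bound that is the common backbone of all results in Section \ref{thm2}. Thus the first step is simply to invoke Theorem \ref{small_data}: choosing the initial data within distance $\varepsilon$ of $u_\infty$ in $L^\infty$-norm makes $\|u_0 - u_\infty\|_{\infty,\Omega}$ small, so for $\varepsilon$ small enough both the smallness condition on $M$ and the $L^1$-bound $L_M$ in Theorem \ref{small_data} are met. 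This yields a unique global classical solution with $\sup_{t\geq 0}\|u(t)\|_{\infty,\Omega}\leq C$.

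\emph{The exponential convergence} is then obtained under the additional complex balanced assumption, and here I would follow the two-stage scheme used in the proof of Theorem \ref{boundary_equi}. First, the uniform-in-time $L^\infty$-bound just established, together with the complex balance structure, puts us exactly in the setting where the entropy method applies: by \cite[Remark 3.9]{DFT17} (or the arguments behind Corollary \ref{system_for_averages}) the solution converges exponentially to $u_\infty$ in $L^1(\Omega)$-norm,
\begin{equation*}
	\|u_i(t) - u_{i\infty}\|_{1,\Omega} \leq C_1 e^{-\lambda_1 t}, \quad i=1,\ldots, m.
\end{equation*}
Interpolating this $L^1$-decay against the uniform $L^\infty$-bound upgrades it to $L^p$-decay for every $p\in(1,\infty)$, namely $\|u_i(t)-u_{i\infty}\|_{p,\Omega}\leq C_p e^{-\lambda_p t}$ with $\lambda_p = \lambda_1/p$, precisely as in Theorem \ref{boundary_equi}.

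\emph{The final bootstrap to $L^\infty$-decay} uses the smoothing estimate $\|S(t)f\|_{\infty,\Omega}\leq C t^{-n/(2p)}\|f\|_{p,\Omega}$ for the Neumann heat semigroup $S(t)=e^{td_i\Delta}$, combined with the Duhamel representation centered at time $t$,
\begin{equation*}
	u_i(t+1) - u_{i\infty} = S(1)(u_i(t)-u_{i\infty}) + \int_0^1 S(1-s)\,R_i(u(t+s))\,ds.
\end{equation*}
Since $R_i(u_\infty)=0$ and the solution is uniformly bounded, the local Lipschitz property gives $\|R_i(u(t+s))\|_{p,\Omega}\leq C\|u(t+s)-u_\infty\|_{p,\Omega}\leq C e^{-\lambda_p(t+s)}$; feeding this into the Duhamel formula and choosing $p>n/2$ so that the singularity $(1-s)^{-n/(2p)}$ is integrable yields $\|u_i(t+1)-u_{i\infty}\|_{\infty,\Omega}\leq C e^{-\lambda_p t}$, which is the desired exponential $L^\infty$-decay.

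\emph{The main obstacle} is not any single estimate but the verification that Theorem \ref{small_data} genuinely applies: one must check that $L^\infty$-closeness of $u_0$ to $u_\infty$ controls \emph{both} the required smallness of the initial data and the $L^1$-bound simultaneously, and that the fixed diffusion coefficients are compatible with the threshold $aL_r\leq 1$ implicit in \eqref{B_M}. Everything downstream — the interpolation and the Duhamel bootstrap — is then routine and structurally identical to the corresponding steps in the proof of Theorem \ref{boundary_equi}, so the $L^\infty$-convergence argument can essentially be cited rather than repeated.
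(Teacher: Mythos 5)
Your proposal matches the paper's proof in all essentials: the paper likewise reduces global existence and uniform boundedness to Theorem \ref{small_data} (after the explicit change of variables $w_i = u_i - u_{i\infty}$, which makes $0$ a zero of the shifted nonlinearity and the shifted initial data small in $L^\infty$), and then obtains the exponential $L^\infty$-convergence by first establishing decay in a weak norm and upgrading it via interpolation and the heat-semigroup Duhamel bootstrap exactly as in Theorem \ref{boundary_equi}. The only cosmetic difference is that the paper cites the spectral gap of \cite[Lemma 3.3]{Tang18} for $L^2$-decay where you invoke \cite[Remark 3.9]{DFT17} for $L^1$-decay; both feed into the identical bootstrap, and the delicate point you flag (that the $L^1$-smallness required by Theorem \ref{small_data} must be checked) is glossed over in the paper as well.
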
 
\begin{remark}
	The $L^1$-bound \eqref{L1bound} is frequently satisfied in chemical reaction networks. For instance, if the network is complex balanced, then \eqref{L1bound} follows from the dissipation of entropy, see \cite[Lemma 2.5]{FT18a}.
\end{remark}
\begin{proof}
	We define $w_i(x,t) = u_i(x,t) - u_{i\infty}$, and consequently obtain the system for $w = (w_1, \ldots, w_m)$
	\begin{equation}\label{sys_w}
	\begin{cases}
		\partial_t w_i - d_i\Delta w_i = f_i(w + u_\infty),\\
		\nabla w_i\cdot \nu = 0,\\
		w_i(x,0) = u_{i0}(x) - u_{i\infty}.
	\end{cases}
	\end{equation}
	By the assumption we have $\|w_i(0)\|_{\infty,\Omega}\leq \varepsilon$. Therefore, Corollary \ref{small_data} is applicable, i.e., for $\varepsilon$ small enough, the system \eqref{sys_w} has a unique global classical solution which is uniformly bounded in time. The same follows immediately for system \eqref{PDE} due to the definition $w_i(x,t) = u_i(x,t) - u_{i\infty}$.
	
	To show the convergence to equilibrium, we first use the spectral gap in \cite[Lemma 3.3]{Tang18} to get that
	\begin{equation*}
		\sum_{i=1}^m\|u_i(t) - u_{i\infty}\|_{2,\Omega} \leq Ce^{-\gamma t}
	\end{equation*}
	for $C, \gamma >0$. The convergence in $L^\infty$-norm follows similarly the arguments in Theorem \ref{boundary_equi}, so we omit it here.
\end{proof}

\par{\bf Acknowledgements:} A special thanks goes to Prof. Klemens Fellner for stimulating discussions.

The third author is supported by the International Training Program IGDK 1754 and NAWI Graz.


\begin{thebibliography}{00}
	\bibitem[Ama85]{Ama85} H. Amann. Global existence for semilinear parabolic systems. \href{https://www.degruyter.com/view/j/crll.1985.issue-360/crll.1985.360.47/crll.1985.360.47.xml}{Journal f\"ur die reine und angewandte Mathematik. 360 (1985) 47-83}.	
	\bibitem[Cra]{Cra} G. Craciun. Toric Differential Inclusions and a Proof of the Global Attractor Conjecture.  \href{https://arxiv.org/abs/1501.02860}{arXiv:1501.02860.}
	\bibitem[CC17]{CC17} M.J. C\'aceres, J.A. Ca\~nizo. Close-to-equilibrium behaviour of quadratic reaction-diffusion systems with detailed balance. \href{https://mathscinet.ams.org/mathscinet-getitem?mr=3659825}{Nonlinear Anal. 159 (2017), 62--84.}
	\bibitem[CDF14]{CDF14} J.A. Ca\~nizo, L. Desvillettes, K. Fellner. Improved duality estimates and applications to reaction-diffusion equations. \href{https://doi.org/10.1080/03605302.2013.829500}{Communications in Partial Differential Equations. 39 no.6 (2014) 1185-1204.}
	\bibitem[CGV19]{CGV19} M.C. Caputo, T. Goudon, A. Vasseur. Solutions of the 4-species quadratic reaction-diffusion system are bounded and $C^\infty$-smooth, in any space dimension. \href{https://msp.org/scripts/coming.php?jpath=apde}{To appear in {Analysis and PDEs}.}
	\bibitem[CHS78]{CHS78} E. Conway, D. Hoff, J. Smoller. Large time behavior of nonlinear reaction diffusion systems. \href{https://epubs.siam.org/doi/10.1137/0135001}{SIAM J. Appl. Math. 35 (1978) 1--16.}
	\bibitem[CJPT]{CJPT} Gheorghe Craciun, Jiaxin Jin, Casian Pantea, Adrian Tudorascu. Convergence to the complex balanced equilibrium for some chemical reaction-diffusion systems with boundary equilibria. \href{https://arxiv.org/abs/1812.07707}{arXiv:1812.07707}.
	\bibitem[DFPV07]{DFPV07} L. Desvillettes, K. Fellner, M. Pierre, J. Vovelle. Global existence for quadratic systems of reaction-diffusion. \href{https://www.degruyter.com/view/j/ans.2007.7.issue-3/ans-2007-0309/ans-2007-0309.xml}{Advanced Nonlinear Studies, 7(3), (2007) 491-511.}
	\bibitem[DFT17]{DFT17} L. Desvillettes, K. Fellner, Bao Q. Tang, Trend to equilibrium for reaction-diffusion systems arising from complex balanced chemical reaction networks. \href{https://epubs.siam.org/doi/abs/10.1137/16M1073935}{SIAM Journal on Mathematical Analysis. 49 (2017), pp. 2666--2709.}
	\bibitem[FHM97]{FHM97} W.B. Fitzgibbon, S. L. Hollis, and J. J. Morgan. Stability and Lyapunov functions for reaction-diffusion systems. \href{https://epubs.siam.org/doi/10.1137/S0036141094272241}{SIAM Journal on Mathematical Analysis 28.3 (1997): 595-610.}
	\bibitem[FT17]{FT17} K. Fellner, Bao Q. Tang. Explicit exponential convergence to equilibrium for nonlinear reaction-diffusion systems with detailed balance condition, \href{http://dx.doi.org/10.1016/j.na.2017.02.007}{Nonlinear Analysis, TMA. 159 (2017) pp. 145 -- 180.}
	\bibitem[FLS16]{FLS16} K. Fellner, E. Latos, T. Suzuki. Global classical solutions for mass-conserving, (super)-quadratic reaction-diffusion systems in three and higher space dimensions, \href{http://dx.doi.org/10.3934/dcdsb.2016106}{Discrete and Continuous Dynamical Systems - Series B, 21 no.10, (2016) 3441--3462.}
	\bibitem[FMT]{FT18} K. Fellner, J. Morgan, B.Q. Tang. Global classical solutions to quadratic systems with mass control in arbitrary dimensions. \href{https://arxiv.org/abs/1808.01315v2}{arXiv:1808.01315v2}.
	\bibitem[FT18]{FT18a} K.~Fellner and B.~Q.~Tang. Convergence to equilibrium of 
	renormalised solutions to nonlinear chemical reaction-diffusion systems.
	\href{https://link.springer.com/article/10.1007/s00033-018-0948-3}{Z. Angew. Math. Phys., 69.3 (2018).}
	\bibitem[Fis15]{Fis15} J. Fischer. Global Existence of Renormalized Solutions to Entropy-Dissipating Reaction-Diffusion Systems. \href{https://link.springer.com/article/10.1007/s00205-015-0866-x}{Arch. Rational Mech. Anal. 218 (2015) pp. 553--587.}
	\bibitem[GV10]{GV10} T. Goudon, A. Vasseur. Regularity analysis for systems of reaction-diffusion equations. \href{http://www.numdam.org/item/?id=ASENS_2010_4_43_1_117_0}{Ann. Sci. \'Ec. Norm. Sup\'er. (4) 43 (2010), no. 1, 117--142.}
	\bibitem[HMP87]{HMP87} S. Hollis, R.H. Martin, M. Pierre. Global existence and boundedness in reaction-diffusion systems. \href{https://epubs.siam.org/doi/abs/10.1137/0518057}{SIAM Journal on Mathematical Analysis. 18 (1987) 744-761.}
	\bibitem[Kan90]{Kan90} J.I. Kanel, Solvability in the large of a system of reaction-diffusion equations with the balance condition, \href{https://mathscinet.ams.org/mathscinet-getitem?mr=1053771}{Differentsial\'ye Uravneniya 26 (1990), 448--458 (English translation:
		Differential Equations 26 (1990), 331--339.}
	\bibitem[Lam87]{Lam87} D. Lamberton, Equations d’\'evolution lin\'eaires associ\'ees \`a des semi-groupes
	de contraction dans les espaces $L^p$, \href{https://mathscinet.ams.org/mathscinet-getitem?mr=886813}{J. Functional Anal., 72 (1987) pp. 252--
		262}.
	\bibitem[LSU68]{LSU68} O. Lady\v{z}enskaja, V.A. Solonnikov, N.N. Ural'ceva. Linear and quasi-linear equations of parabolic
	type. Vol. 23. American Mathematical Soc., 1968.
	\bibitem[Mar56]{Mar56} L. Markus. Asymptotically autonomous differential systems. \href{https://mathscinet.ams.org/mathscinet-getitem?mr=81388}{Contributions to the theory of nonlinear oscillations, vol. 3, pp. 17–29. Annals of Mathematics Studies, no. 36. Princeton University Press, Princeton, N. J., 1956.}
	\bibitem[Mor89]{Mor89} J. Morgan. Global existence for seminlinear parabolic systems. \href{https://epubs.siam.org/doi/abs/10.1137/0520075}{SIAM Journal on Mathematical Analysis. 20 (1989) 1128-1144.}		
	\bibitem[Mor90]{Mor90} J. Morgan. Boundedness and decay results for reaction-diffusion systems. \href{https://epubs.siam.org/doi/abs/10.1137/0521064}{SIAM Journal on Mathematical Analysis 21.5 (1990): 1172-1189.}
	\bibitem[Pie03]{Pie03} M. Pierre. Weak solutions and supersolutions in $L^1$ for reaction-diffusion systems. \href{https://doi.org/10.1007/s000280300007}{J. Evol. Equ. 3 (2003), no. 1, 153--168}
	\bibitem[Pie10]{Pie10} M. Pierre. Global existence in reaction-diffusion systems with control of mass: a survey. \href{https://link.springer.com/article/10.1007/s00032-010-0133-4}{Milan Journal of Mathematics. 78.2 (2010): 417-455.}
	\bibitem[PS97]{PS97} M. Pierre, D. Schmitt. Blowup in reaction-diffusion systems with dissipation of mass. \href{https://doi.org/10.1137/S0036144599359735}{\it SIAM Review 42(1), 93--106.}
	\bibitem[PSZ17]{PSZ17} M. Pierre, T. Suzuki, R. Zou. Asymptotic behavior of solutions to chemical reaction-diffusion systems. \href{https://doi.org/10.1016/j.jmaa.2017.01.022}{J. Math. Anal. Appl. 450 (2017), no. 1, 152--168.}
	\bibitem[PSU18]{PSU18}  M. Pierre, T. Suzuki, H. Umakoshi. Asymptotic behavior in chemical reaction-diffusion systems with boundary equilibria. \href{https://mathscinet.ams.org/mathscinet-getitem?mr=3848644}{J. Appl. Anal. Comput. 8 (2018), no. 3, 836--858.}
	\bibitem[PSY19]{PSY19} M. Pierre, T. Suzuki, Y. Yamada, Dissipative reaction diffusion systems with quadratic growth, \href{https://www.iumj.indiana.edu/oai/2019/68/7447/7447.xml}{Indiana Univ. Math. J.  68 (2019),  291-322.}
	\bibitem[Rot84]{Rot84} F. Rothe. Global Solutions of Reaction-diffusion Systems. \href{https://mathscinet.ams.org/mathscinet-getitem?mr=755878}{Lecture Notes in Mathematics vol. 1072, Springer-Verlag, Berlin, 1984.}
	\bibitem[Sou18]{Sou18} P. Souplet. Global existence for reaction--diffusion systems with dissipation of mass and quadratic growth. \href{https://link.springer.com/article/10.1007/s00028-018-0458-y}{J. Evol. Eqs. 18(4) (2018) 1713--1720.}
	\bibitem[Tang18]{Tang18} Bao Q. Tang, Close-to-equilibrium regularity for reaction-diffusion systems. \href{https://link.springer.com/article/10.1007/s00028-017-0422-2}{Journal of Evolution Equations. 18.2 (2018) 845-869.}
\end{thebibliography}
\end{document}